\documentclass[12pt,a4paper]{article}
\usepackage{authblk}

\usepackage[latin1]{inputenc}
\usepackage[english]{babel}
\usepackage{amsfonts,amsmath}
\usepackage{mathabx}
\usepackage{fancyhdr,graphicx, xcolor}

\usepackage{caption}
\usepackage{ulem} 

\usepackage{mathtools}

\usepackage[shortlabels]{enumitem}

\usepackage{amsthm} 

\usepackage[scanall,209mode]{psfrag}

\def\calB{\mathcal B}
\def\calD{\mathcal D}
\def\DS{\mathcal{D}_S}

\def\R{\mathbb R}
\def\C{\mathbb C}

\def\ran{ \mathop{ran} }

\DeclareMathOperator{\Diag}{Diag}

\def\tr{\text{tr}}
\def\im{\text{im}}
\def\ran{\text{ran}}

\def\restrict#1{\raise-.5ex\hbox{\ensuremath{\big|}}_{#1}} 

\usepackage[hidelinks]{hyperref} 
\newtheorem{theorem}{Theorem}

\newtheorem{corollary}{Corollary}

\newtheorem{definition}{Definition}
\newtheorem{example}{Example}

\newtheorem{lemma}{Lemma}

\newtheorem{proposition}{Proposition}
\newtheorem{remark}{Remark}

\numberwithin{equation}{section}

\title{Best approximants relative to a C$^*$-subalgebra, joint numerical range and subdifferentials}

  \author[1,2]{Tamara Bottazzi}
  \author[3,4]{Alejandro Varela}
  
  \affil[1]{Universidad Nacional de R\'io Negro. Centro Interdisciplinario de Telecomunicaciones, Electrónica, Computación y Ciencia Aplicada, Sede Andina (8400) S.C. de Bariloche, Argentina}
  \affil[2]{Consejo Nacional de Investigaciones Cient\'ificas y T\'ecnicas, (1425) Buenos Aires,
  	Argentina}
  \affil[3]{Instituto de Ciencias, Universidad Nacional de Gral. Sarmiento, J.	
  	M. Gutierrez 1150, (B1613GSX) Los Polvorines, Argentina}
  
  \affil[4]{Instituto Argentino de Matem\'atica ``Alberto P. Calder\'on", Saavedra 15 3er. piso, (C1083ACA) Buenos Aires, Argentina}
  
  \affil[ ]{\texttt{tbottazzi@unrn.edu.ar} \quad \texttt{avarela@campus.ungs.edu.ar}}
  
\begin{document} 
\date{}
\maketitle

%
%
%
%

\begin{abstract}
	We study the minimality of $n\times n$ Hermitian matrices $A$ respect to a $C^*$-subalgebra $\mathcal{B}$ of $M_n(\C)$ in the spectral norm, that is
	\[\|A\|\leq \|A+B\|,\ \text{ for every } B\in \mathcal{B}.\]
	We generalize the notion of the moment of a subspace and relate it to the joint numerical range and the subdifferentials of the maximum eigenvalue. We extend results previously known for the subalgebra of diagonal operators and describe the subdifferential of the maximum eigenvalue in terms of the moment of the corresponding eigenspace. We also characterize $\mathcal{B}$-minimality via moments and subdifferentials, and provide examples.
\end{abstract}

{\bf 2020 MSC: (PRIMARY) 15A60, 41A50, 47B15, (Secondary) 47A12, 47A30}

{\bf Keywords: Minimal operators, Subdifferential of eigenvalues, Moment of a subspace, best approximation of matrices}

\section{Introduction and prelimininaries}

Let $M_n(\C)$ be the space of complex square matrices of $n\times n$, which is a $C^*$-algebra. Consider a $C^*$-subalgebra $\calB$ of $M_n(\C)$. We call $A\in M_n(\C)$ a $\calB$-minimal matrix if
\[\|A\|\leq \|A+B\|, \text{ for all } B\in \calB.\]
$\calB$-minimality is closely related with best approximation, since $A$ is $\calB$-minimal if and only if 
\[dist(A,\calB)=\|A\|.\]
If $A$ is Hermitian, then 
\[dist(A,\mathcal{B})=dist(A,\mathcal{B}^h),\]
where the superscript denotes the subset of Hermitian (self-adjoint) elements. This holds by Remark 2.1 \cite{ammrv}, $\|A+\text{Re} (B)\|=\|\text{Re}(A+B)\|\leq \|A+B\|$ for all $B\in \mathcal{B}$. So, we will consider only the Hermitian elements of $\mathcal{B}$.

Our interest in this subject is related to the following context. Let $\mathcal{A}$ be a C$^*$-algebra and $\mathcal{B}$ a C$^*$-subalgebra sharing the same unit. The description of minimal self-adjoint elements $B\in\calB$ with respect to a self-adjoint element $A\in \mathcal{A}$ ($\calB$-minimality) allows a concrete description of geodesics in a natural Finsler metric induced by the norm of $\mathcal{A}$ on the flag manifolds $U_\mathcal{A}/U_\mathcal{B}$ (a homogeneous space under the left action of $U_\mathcal{A}$) associated with the quotient space $\mathcal{A}/\mathcal{B}$ (see \cite{dmr1}). 
In this setting, if $Z$ is a Hermitian matrix that is minimal in the spectral norm, then the curve $\gamma(t)=e^{itZ}A e^{-itZ}$ is length-minimizing in $U_\mathcal{A}/U_\mathcal{B}$ between $\gamma(0)$ and $\gamma(t)$ for $|t|\leq \frac{\pi}{2\|Z\|}$ (see \cite{dmr1}).

$\calB$-Minimality is closely related to Birkhoff-James orthogonality in the normed space $\left( M_n(\C), \|\cdot\|\right)$ \cite{Bi,James}: two matrices $A,B\in M_n(\C)$ are Birkhoff-James orthogonal if and only if $\|A\|\leq \|A+\lambda B\|$, for every $\lambda\in \C$. In particular, $A\in M_n(\C)$ is a $\calB$-minimal matrix if and only if $A$ is Birkhoff-James orthogonal to every $B\in \calB$. This is equivalent to state that $A$ is Birkhoff-James orthogonal to $\calB$ \cite{Grover, B}.

The case where $\calB$ is the $C^*$-subalgebra of diagonal matrices (respect to a fixed basis of $\C^n$) has been extensively studied in \cite{alrv, BV-2013, KV 3x3, BV-Moment, BV-SDP}. In the present work, we are focused to study and generalize some of the results that involve relations between $\calB$-minimality, joint numerical range (usually called algebraic joint numerical range) and subdifferentials, as we did for diagonal matrices in \cite{KV moment, BV-Moment, BV-SDP}. In this sense, in \cite{BV-SDP} we relate minimal operators to the subdifferentials of the maximum eigenvalue and the norm. We connect these notions with the moments of the eigenspaces corresponding to the largest and smallest eigenvalues, as well as with certain joint numerical ranges; these ideas were developed in \cite{KV moment} for matrices, and \cite{BV-Moment} for compact operators on a Hilbert space. In particular, one of our main results provides an explicit characterization of the subdifferential of the largest eigenvalue of a Hermitian matrix $A(x)$ with variable real diagonal $x$,  
\begin{equation*}
	\partial\big(\lambda_{max}(A(x))\big)=\Diag(\partial\lambda_{max}(A(x)))= m_{S_{max}},
\end{equation*}
in terms of $m_{S_{max}}=\text{conv}\{(|v_1|^2,..., |v_n|^2): v= (v_1,...,v_n)\in S_{\lambda_{max}}, \|v\|=1\}$, the moment of the eigenspace related to the largest eigenvalue $\lambda_{max}(A(x))$. 

Now, considering any other $C^*$-subalgebra $\calB$ of $M_n(\C)$, we need to generalize the definition of moment of a subspace in order to study its relation with the joint numerical range of a certain family and the subdifferentials of the maximum eigenvalue of a variable matrix $A(x)=A_0+\sum_{i=1}^{\dim(\calB)}x_iB_i$ with $A_0\in M_n^h(\C)$, $x_i\in \R$ and $\{B_i\}_{i=1}^{\dim(\calB)}$ any fixed basis of $\calB$.  

The results presented in this paper are divided into three parts. 
In Section \ref{sec: grales}, we establish the appropriate definitions for the generalization we wish to develop and obtain several results that were previously known only for the algebra of diagonal operators. These include the moment set of an eigenspace, the joint numerical range of certain matrices defined with respect to a chosen basis $\mathbb{B}$ of $\calB$, the support of a pair of subspaces and their relation with $\calB$-minimality.
In Section \ref{section subdiff and moment} we study the subdifferential of the maximum eigenvalue of matrices and we link it with the moment of its eigenspace. Moreover, we state an equivalence between $\calB$-minimality and the condition $0\in \partial\lambda_{min}+\partial\lambda_{max}$.
Finally, Section \ref{sec examples} is dedicated to show different examples, including one related to quantum information theory.

\section{General framework of $\calB$-minimality}\label{sec: grales}

Let $\calB$ be a C$^*$-subalgebra of $M_n(\C)$, and the subspace
$$
\mathcal{B}_{\tr}^\perp=\{X\in M_n(\C):\tr(XB)=0, \forall B\in\calB\}
$$
with $\dim(\calB_{\tr})^\perp=n^2-\dim(\calB)$.

Recall the following result from \cite[Proposition 2.3, Lemma 2.5 and Theorem 2.6]{zhang-jiang-han-matrices}.

\begin{theorem}\label{teo X mod}
Let $\mathcal{B}$ be a C$^*$-subalgebra of $M_n(\C)$.  Then
\begin{equation}
	\begin{split}
		A\in M_n^h(\C)& \text{ is } \mathcal{B}\text{-minimal } \Leftrightarrow \\
		&\Leftrightarrow 
		\text{ there exists a Hermitian } X \in \mathcal{B}_{\tr}^\perp\setminus\{0\} \\
		&\hskip.7cm\text{ such that } A X =\|A\|\ |X|.
	\end{split}
\end{equation}

Moreover, if $A \in M_n^h(\C)$ is $\mathcal{B}$-minimal, and $\mathcal{B}$
shares the identity $I=I_n$ with $M_n^h(\C)$,
 then $\pm \|A\| \in \sigma(A)$.
\end{theorem}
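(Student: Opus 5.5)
The plan is to obtain the equivalence from the duality between best approximation and annihilating functionals, and then read off the spectral consequence from the condition $AX=\|A\|\,|X|$.

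\emph{Reduction to a trace-duality statement.} By the Introduction, for Hermitian $A$ it suffices to compare with $B\in\calB^h$, since $\|A+\mathrm{Re}(B)\|\le\|A+B\|$. The dual of $(M_n(\C),\|\cdot\|)$ is $M_n(\C)$ with the trace norm $\|\cdot\|_1$, via the pairing $\langle X,Y\rangle=\tr(XY)$. By Hahn--Banach, $\mathrm{dist}(A,\calB)=\|A\|$ holds if and only if there is $X$ with $\|X\|_1=1$, $\tr(XB)=0$ for all $B\in\calB$, and $\tr(XA)=\|A\|$. Because $A$ is Hermitian and $\calB$ is closed under adjoints, we can replace $X$ by $(X+X^*)/2$. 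This keeps $X$ annihilating $\calB$ and keeps $\tr(XA)=\|A\|$. It also keeps $\|X\|_1\le 1$, and since $\|A\|=\tr(XA)\le\|X\|_1\|A\|$ we get $\|X\|_1=1$. So $X$ can be taken Hermitian and nonzero in $\calB_{\tr}^\perp$.

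\emph{From the functional to $AX=\|A\|\,|X|$.} Write $X=\sum_j\mu_j\,x_jx_j^*$ in spectral form. Then
\[
\|A\|=\tr(AX)=\sum_j\mu_j\langle Ax_j,x_j\rangle\le\sum_j|\mu_j|\,\|A\|=\|A\|.
\]
Equality forces $\langle Ax_j,x_j\rangle=\mathrm{sgn}(\mu_j)\|A\|$ whenever $\mu_j\neq0$. Since $\|A\|$ is the largest absolute eigenvalue, each such $x_j$ is an eigenvector with $Ax_j=\mathrm{sgn}(\mu_j)\|A\|x_j$. Therefore $AX=\sum_j\mu_j\,\mathrm{sgn}(\mu_j)\|A\|\,x_jx_j^*=\|A\|\,|X|$.

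\emph{Converse.} Given $X$ as in the statement, normalize so that $\|X\|_1=1$. Taking traces in $AX=\|A\|\,|X|$ gives $\tr(AX)=\|A\|$. Then for every $B\in\calB$,
\[
\|A\|=\tr((A+B)X)\le\|A+B\|\,\|X\|_1=\|A+B\|,
\]
which is $\calB$-minimality. This step is immediate.

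\emph{Moreover part.} Since $I\in\calB$, we have $\tr(X)=\tr(XI)=0$. As $X\neq0$ is Hermitian with zero trace, it has both a positive and a negative eigenvalue. By the previous step, the corresponding eigenvectors satisfy $Ax=\|A\|x$ and $Ax=-\|A\|x$ respectively. Hence $\pm\|A\|\in\sigma(A)$. (If $\|A\|=0$ the claim is trivial.)

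The main obstacle I expect is the equality analysis in the second step, namely justifying that $\langle Ax,x\rangle=\pm\|A\|$ with $\|x\|=1$ forces $x$ to be an eigenvector. This follows because $\|A\|I\mp A\ge0$ and a positive operator with $\langle Px,x\rangle=0$ satisfies $Px=0$. The Hahn--Banach step with the trace-norm duality is standard.
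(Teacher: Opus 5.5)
Your proof is correct. It necessarily differs from the paper's, because the paper gives no argument of its own: Theorem~\ref{teo X mod} is quoted from \cite{zhang-jiang-han-matrices} (Proposition 2.3, Lemma 2.5, Theorem 2.6 there). The block form \eqref{equ A y X en descomp ortog de A} used right afterwards is quoted separately, from Case 1 of Theorem 2.7 of the same reference.

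You give the direct duality proof instead. It has four ingredients:
\begin{itemize}
\item Hahn--Banach for the quotient norm of $M_n(\C)/\calB$;
\item the isometric identification of the dual of $(M_n(\C),\|\cdot\|)$ with $(M_n(\C),\|\cdot\|_1)$ via the trace;
\item symmetrization $X\mapsto (X+X^*)/2$, using $\calB^*=\calB$;
\item the equality case of $\tr(AX)\le\|A\|\,\|X\|_1$, where $\|A\|I\mp A\ge 0$ turns $\langle Ax_j,x_j\rangle=\pm\|A\|$ into eigenvector equations.
\end{itemize}

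This buys you three things.
\begin{itemize}
\item The argument is self-contained.
\item It never uses that $\calB$ is closed under multiplication. So the equivalence holds for any $*$-closed subspace, and the ``moreover'' part needs only $I\in\calB$ to force $\tr(X)=0$.
\item Your equality analysis already shows that the positive spectral part of $X$ lives in $\ker(A-\|A\|I)$ and the negative part in $\ker(A+\|A\|I)$. That is exactly the block form \eqref{equ A y X en descomp ortog de A}, with $X_{1,1}\ge 0$ and $X_{2,2}\le 0$.
\end{itemize}
Semidefiniteness is all one gets in general. In the first example of Section~\ref{sec examples}, take $\calB$ the diagonal algebra and $A=M_1$. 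Then every admissible $X$ has $X_{1,1}$ of rank one on the two-dimensional $E_+$. The citation route, for its part, keeps the paper short and ties it to the finer structure results of the reference.

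Two small points.
\begin{itemize}
\item Your deduction of $\|X\|_1=1$ from $\|A\|\le\|X\|_1\|A\|$ requires $\|A\|>0$, so $A=0$ must be treated separately. For $A=0$ the left side always holds. The right side holds iff $\calB_{\tr}^\perp\neq\{0\}$, since a nonzero element of this $*$-closed space yields a nonzero Hermitian one. So the equivalence as stated fails only in the degenerate case $A=0$, $\calB=M_n(\C)$.
\item In the last part you need not refer back to the Hahn--Banach $X$. Apply $AX=\|A\|\,|X|$ to an eigenvector $x$ of $X$ with eigenvalue $\mu\neq 0$. This gives $\mu Ax=\|A\|\,|\mu|\,x$, hence $Ax=\mathrm{sgn}(\mu)\|A\|x$, for any $X$ as in the statement.
\end{itemize}
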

With the hypothesis and notations of Theorem \ref{teo X mod} plus the condition that $\mathcal{B}$ has the same identity that $M_n(\C)$, we can consider the spectral projections $E_+$, $E_-$, $I-E_+-E_-$ and the orthogonal decomposition of $\C^n=\text{ran}(E_+)\oplus \text{ran}(E_-)\oplus \big(\text{ran}(E_+)\oplus \text{ran}(E_-)\big)^\perp$ in terms of the eigenspaces of $A$ corresponding to the eigenvalues of $-\|A\|$ and $\|A\|$. Hence, supposing that $\|A\|=1$, we can write  
\begin{equation}
	\label{equ A y X en descomp ortog de A}
A=\begin{pmatrix}
	I&0&0\\
	0&-I&0\\
	0&0&A_{3,3} 
\end{pmatrix}
\ \text{ and }\
X=\begin{pmatrix}
	X_{1,1}&0&0\\
	0&X_{2,2}&0\\
	0&0&0
\end{pmatrix}.
\end{equation}
with $X_{1,1}> 0$ and $X_{2,2}< 0$ (see  Case 1 in Theorem 2.7 of \cite{zhang-jiang-han-matrices} and note that $X_{1,1}$ and $X_{2,2}$ can be diagonal if certain basis is chosen). 
Here $X \ge 0$ denotes a positive semidefinite matrix, that is, $\langle Xu,u\rangle \ge 0$ for all $u \in \mathbb{C}^n$. We write $X > 0$ (respectively $X < 0$) to indicate that $X$ is positive (respectively negative) definite.

Let us denote as $\calD$ the set of density matrices, that is
	\[\calD=\{X\in M_n^h(\C):\ X\geq 0,\ \tr(X)=1 \}.
	\]

Now suppose that $\{B_1, B_2, \dots, B_r\}$ is a fixed orthonormal basis of $\mathcal{B}$. Then, for any $Y \in M_n(\mathbb{C})$, it is clear that
\begin{equation*}
	\begin{split}
\operatorname{tr}(Y B_k) &= 0 \ \text{for all } k = 1, \dots, r 
\ \Leftrightarrow \\
& \Leftrightarrow\operatorname{tr}(Y B) = 0 \ \text{for all } B \in \mathcal{B} 
\ \Leftrightarrow\ 
Y \in \mathcal{B}^\perp_{\operatorname{tr}}.
	\end{split}
\end{equation*}

\begin{remark}
	If $I\in\calB$ and $\dim\calB=t$ we can consider an orthonormal basis of $\calB$ as $\mathbb{B}=\{\frac1{\sqrt{n}} I,B_2,\dots,B_t \}$ with $B_j$ Hermitian. Observe that then it must hold that $\tr(B_j)=0$ for $j=2,\dots, t$. $\mathbb{B}$ is also a orthonormal basis of $\calB^h$ as a real subspace. See for example the Gell-Mann generalized basis used in \cite[proof of Proposition 9]{BV-Moment}.
\end{remark}

In particular, taking the $X$ of Theorem \ref{teo X mod} (see \eqref{equ A y X en descomp ortog de A}), then $0=\tr(X I)=\tr(X_{1,1}+X_{2,2})$ and hence $\tr(X_{1,1})=-\tr(X_{2,2})\neq 0$ since $X\neq 0$.
Then, if we denote with $\rho_1=\frac1{\tr(X_{1,1})}X_{1,1}$ and $\rho_2=\frac1{\tr(X_{2,2})}X_{2,2}$, follows that $\rho_1,\rho_2\geq0$ and $\tr(\rho_1)=\tr(\rho_2)=1$, and hence $\rho_1,\rho_2\in\calD$. Moreover, since $X=X_{1,1}+X_{2,2}\in \calB_{\tr}^\perp$, $\tr\big((\rho_1-\rho_2)B\big)=\frac1{\tr(X_{1,1})}\tr\big((X_{1,1}+X_{2,2})B\big)=0$ for all $B\in\calB$, we obtain that $\tr(\rho_1 B)=\tr(\rho_2 B)$ for all $B\in\calB$, or equivalently
$$
\tr(\rho_1 B_k)=\tr(\rho_2 B_k) \ , \forall k=1,\dots, r 
$$
for $\{B_k\}_{k=1}^r$ any orthonormal basis of $\calB$.

Note that, since $\rho_1 , \rho_2\geq0 $, then $\tr(\rho_1 B_k)=\tr(\rho_2 B_k)\in\R$.

Recall the definition of the (algebraic) joint numerical range of a family of $t$ Hermitian matrices $A_1, \dots, A_t\in M_n^h(\C)$ which is the convex set of $\R^t$
\begin{equation} \label{def jnr}
	\begin{split}
		W(A_1, \dots, A_t) = \{  (\tr(\rho A_1),   \dots, \tr(\rho A_t))& \in \R^t:	\, \rho \in \calD \}.
	\end{split}
\end{equation}

\begin{definition}\label{defi mS}
 	Let $S$ be a subspace of $\C^n$ with $\dim(S)=r$. We define the moment $m_S$ of $S$ in a fixed orthonormal basis $\mathbb{B}=\{B_k\}_{i=1}^{\dim(\calB)}$ of Hermitian matrices of a C$^*$-subalgebra $\calB\subset M_n(\C)$, as the subset of $\R^{\dim(\calB)}$ given by
 	\begin{equation}
 		\label{def momento de un Subesp en relacion a B}
 		m_S=m_{S,\mathbb{B}}=\mathop{\bigcup}\limits_{\substack{\text{o.n. basis } \\
 				\{v_i\}_{i=1}^{r} \text{of } S}
 		}  \text{conv}\left\{ \left(\left(v_i v_i^*\right)_1,\dots, \left(v_i v_i^*\right)_{\dim(\calB)} \right)  \right\}_{i=1}^{r}
 	\end{equation}
 	where we denoted with $v_i$ the column vector, $v_i^*$ the row vector conjugated, $\left(v_i v_i^*\right)_k=\langle v_i v_i^*, B_k\rangle=\tr(v_i v_i^* B_k^*)=\tr( v_i^* B_k v_i)\in\R$ the $k^\text{th}$ coordinate of the orthogonal projection  $v_i v_i^*$ of  rank one in the basis $\mathbb{B}$ of $\calB$. See equivalent description of $m_S$ in Proposition \ref{prop 1}.

\end{definition}

Let 
\begin{equation}
	\label{def matrices de densidad de S}
	\calD_{S}=\{\rho\in M^h_n(\C): \im(\rho)\subset S,\ \rho\geq 0 \text{ and } \tr(\rho)=1\}\subseteq \calD
\end{equation}
be the set of density matrices of a subspace $S\subset \C^n$.

For every $\rho\in\calD_S$, holds that 
\begin{equation}\label{equ densidad escrita usando autovals y autovecs}
\rho=\sum_{i=1}^{\dim(S)} \lambda_i s_i (s_i)^*
\end{equation}
for $\lambda_i\in\R_{\geq 0}$, with $\sum_{i=1}^{\dim(S)} \lambda_i =1$ and $\{s_i\}_{i=1}^{\dim(S)}$ an o.n. basis of $S$. 

Consider the application $\Phi^\mathbb{B}:\calD_{S}\to\R^{\dim(\calB)}$ defined by
\begin{equation}\label{defi phi}
	\Phi^{\mathbb{B}}(\rho)= (\rho_1, \rho_2, \dots, \rho_{\dim(\calB)} ),
\end{equation}
where $\rho_k=\langle \rho,B_k\rangle=\tr(\rho B_k)$ is the $k^{\text{th}}$ coordinate of the projection of $\rho$ on $\calB$ in the orthonormal basis $\mathbb{B}=\{B_k\}_{k=1}^{\dim(\calB)}$ of $\calB$. 

\begin{remark}\label{change of basis}
	If $\mathbb{B}=\{B_k\}_{k=1}^{\dim(\calB)}$ and $\mathbb{E}=\{E_k\}_{k=1}^{\dim(\calB)}$ are two different orthonormal basis of Hermitian matrices of $\calB$ and $\dim(\calB)=s$, then for any $\rho\in \calD_{S}$
	\[\rho_k= \langle\rho,B_k\rangle= \langle \rho, \sum_{i=1}^{s}\beta_i^k E_i \rangle=\sum_{i=1}^{s}\overline{\beta_i^k}\left\langle \rho,  E_i \right\rangle, \]
	where $B_k=\sum_{i=1}^{s}\beta_i^k E_i$ with $\beta_i^k=\langle B_k,E_i\rangle=\tr(B_kE_i)=\tr(E_iB_k)=\langle E_i,B_k\rangle\in \R$, $1\leq i\leq s$. Therefore, 
	\begin{align*}
		\Phi^{\mathbb{B}}(\rho)&= (\rho_1, \rho_2, \dots, \rho_{s} )\\
		&= \left(\sum_{i=1}^{s}\beta_i^1\left\langle \rho,  E_i \right\rangle ,\sum_{i=1}^{s}\beta_i^2\left\langle \rho,  E_i \right\rangle,...\sum_{i=1}^{s}\beta_i^{s}\left\langle \rho,  E_i \right\rangle\right) \\
		&=\begin{pmatrix}
			\beta_1^1&\beta_2^1&\cdots&\beta_{s}^1\\
			\beta_1^2&\beta_2^2&\cdots&\beta_{s}^2\\
			\vdots&\vdots&\vdots&\vdots\\
			\beta_1^{s}&\beta_2^{s}&\cdots&\beta_{s}^{s}\\
		\end{pmatrix}\cdot\begin{pmatrix}
			\langle \rho,  E_1\rangle\\
			\langle \rho,  E_2\rangle\\
			\vdots\\
			\langle \rho,  E_s\rangle		
		\end{pmatrix}\\
		&= C_{\mathbb{E},\mathbb{B}}\Phi^{\mathbb{E}}(\rho).
	\end{align*}
	 where $C_{{\mathbb{E},\mathbb{B}}}$ is the change of basis matrix between $\mathbb{B}$ and $\mathbb{E}$. Therefore, since both basis of $\mathcal{B}$ are orthogonal $C_{{\mathbb{E},\mathbb{B}}}$ is an $s\times s$ unitary matrix.
\end{remark}

 \begin{lemma} 
 \label{lema properties of ms}
 Let $S$ be a subspace of $\C^n$ with $\dim(S)=r>0$, $\mathbb{B}=\{B_k\}_{i=1}^{\dim(\calB)}$ be a fixed orthonormal basis of Hermitian matrices of a C$^*$-subalgebra $\calB\subset M_n(\C)$ and  $\Phi^{\mathbb{B}}:\calD_{S}\to\R^{\dim(\calB)}$ defined as in \eqref{defi phi}. Then,
  \begin{enumerate}
 	\item $m_S\subset W\left(\{B_k\}_{k=1}^{\dim(\calB)} \right) $;
 	\item if there is $x\in S$ such that $Bx\neq 0$ for some $B\in\calB$, then  $(0,\dots,0)\notin m_S$. In particular, if $I\in \calB$ follows that $(0,\dots,0)\notin m_S$;
 	\item \label{mS convexo compacto} $m_S$ is a convex and compact set of $\R^{\dim(\calB)}$.
 \end{enumerate}
\end{lemma}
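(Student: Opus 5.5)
The plan is to first prove the identity
\[
m_S=\Phi^{\mathbb{B}}(\calD_S),
\]
since all three items follow quickly from it. Given $\rho\in\calD_S$, write it as in \eqref{equ densidad escrita usando autovals y autovecs}, $\rho=\sum_{i=1}^{r}\lambda_i s_i s_i^*$, with $\{s_i\}$ an orthonormal basis of $S$, $\lambda_i\geq 0$ and $\sum_i\lambda_i=1$. Linearity of the trace then gives
\[
\Phi^{\mathbb{B}}(\rho)=\sum_i\lambda_i\big((s_is_i^*)_1,\dots,(s_is_i^*)_{\dim(\calB)}\big),
\]
which lies in the convex hull attached to the basis $\{s_i\}$ in \eqref{def momento de un Subesp en relacion a B}. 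Conversely, a point of that convex hull for some orthonormal basis $\{v_i\}$ of $S$ has the form $\sum_i\lambda_i\Phi^{\mathbb{B}}(v_iv_i^*)=\Phi^{\mathbb{B}}(\sum_i\lambda_i v_iv_i^*)$, and $\sum_i\lambda_iv_iv_i^*\in\calD_S$. The coordinates are real because each $B_k$ is Hermitian and $\rho\geq0$, so $\Phi^{\mathbb{B}}$ really takes values in $\R^{\dim(\calB)}$.

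With the identity in hand, item 3 is immediate. $\calD_S$ is convex, and it is closed and bounded in the finite-dimensional space $M_n^h(\C)$, hence compact. The map $\Phi^{\mathbb{B}}$ is real-linear and continuous, so its image $m_S$ is convex and compact. Item 1 also follows directly: $\calD_S\subseteq\calD$, so $m_S=\Phi^{\mathbb{B}}(\calD_S)\subseteq\Phi^{\mathbb{B}}(\calD)=W(B_1,\dots,B_{\dim(\calB)})$ by \eqref{def jnr}.

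For item 2, the case $I\in\calB$ is easy. Expanding $I$ in the basis $\mathbb{B}$ gives, for every $\rho\in\calD_S$, $\sum_k\overline{\beta_k}\,\rho_k=\tr(\rho I)=1$, so $\Phi^{\mathbb{B}}(\rho)\neq 0$. For the general claim I would argue by contradiction. Suppose $0=\Phi^{\mathbb{B}}(\rho)$ for some $\rho\in\calD_S$. Then $\tr(\rho B)=0$ for all $B\in\calB$. In particular, using that $\calB$ is a $*$-algebra, $\tr(\rho B^*B)=\|B\rho^{1/2}\|_2^2=0$, so $B$ vanishes on $\operatorname{im}(\rho)$ for every $B\in\calB$.

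The main obstacle is passing from "$B$ vanishes on $\operatorname{im}\rho$" to a contradiction with "$Bx\neq0$ for some $x\in S$". This only works if $x\in\operatorname{im}\rho$, and $\rho$ may be supported on a proper subspace of $S$. I would therefore read the hypothesis as requiring that $\calB$ annihilates no nonzero vector of $S$, that is, $\bigcap_{B\in\calB}\ker B\cap S=\{0\}$; this is automatic when $I\in\calB$. Under that reading the argument closes: $\operatorname{im}\rho$ is a nonzero subspace of $S$ annihilated by all of $\calB$, which is a contradiction. If the weaker hypothesis (a single $x\in S$) is intended, I would look for a counterexample first, taking $S$ spanned by a vector killed by $\calB$ and another that is not.
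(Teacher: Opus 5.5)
Your treatment of items 1 and 3 matches the paper's. The paper also derives item 3 from the identity $m_S=\Phi^{\mathbb{B}}(\calD_S)$, which it proves only later, in Proposition \ref{prop 1}, by the same spectral-decomposition argument you give up front. For item 1 it argues directly that each generator $w_i$ lies in $W(\{B_k\})$ and then uses convexity of $W$. That is equivalent to your observation that $\calD_S\subseteq\calD$.

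On item 2 your suspicion is justified, and the counterexample you sketch works. Take $n=2$ and $\calB=\C\, e_1e_1^*$, a C$^*$-subalgebra not containing $I_2$. Let $\mathbb{B}=\{e_1e_1^*\}$, $S=\C^2$ and $x=e_1$. Then $e_1e_1^*x\neq 0$, but $\rho=e_2e_2^*\in\calD_S$ gives $\Phi^{\mathbb{B}}(\rho)=0$. In fact $m_S=\{\rho_{11}:\rho\in\calD\}=[0,1]\ni 0$, so item 2 is false as stated.

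The paper's proof of item 2 takes an orthonormal basis of $S$ containing $x/\|x\|$, notes that some $w_i\neq 0$, and concludes that no convex combination of the $w_i$ can vanish. That step is a non sequitur, and it also overlooks that $m_S$ is a union over all orthonormal bases of $S$, not just those containing $x/\|x\|$.

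Your argument via $\tr(\rho B^*B)=\|B\rho^{1/2}\|_2^2$ shows that the right hypothesis is exactly the one you propose. Indeed, $0\in m_S$ if and only if some unit vector $y\in S$ is annihilated by every $B\in\calB$; for the converse, take $\rho=yy^*$.

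For the case $I\in\calB$, your observation is the argument actually needed. Writing $I=\sum_k \tr(B_k)B_k$ shows that every point $y\in m_S$ satisfies $\sum_k\tr(B_k)\,y_k=1$, so $0\notin m_S$. The paper reaches this case only through its flawed general argument.

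So your proposal proves everything in the lemma that is true and correctly isolates the part that is not.
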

\begin{proof}
	\begin{enumerate}  
	\item If $x\in m_S$, then $
	x\in \text{conv}\left\{ \left(\left(v_i v_i^*\right)_1,\dots, \left(v_i v_i^*\right)_{\dim(\calB)} \right)  \right\}_{i=1}^{r}=
	\text{conv}\left( \{w_i\}_{i=1}^r\right)
	$	
	for some orthonormal basis $\{v_i\}_{i=1}^r$ of $S$ with 
	$$w_i=\left(\left(v_i v_i^*\right)_1,\dots, \left(v_i v_i^*\right)_{\dim(\calB)} \right).$$ 
	We also have that 
	$
	\left(v_i v_i^*\right)_k=\tr(v_i v_i^* B_k)=\langle B_kv_i,v_i\rangle\in \R,\ \text{for } k=1,\dots,\dim(\mathcal{B}), 
	$
	and hence
	\begin{equation} \label{wi}
		w_i= \left( \tr(v_i v_i^* B_1),\tr(v_i v_i^* B_2),...,\tr(v_i v_i^* B_{\dim(\mathcal{B})})\right)\in \R^{\dim(\mathcal{B})},
	\end{equation}
	for  $i=1,\dots, r$. Then, since $\tr(v_i v_i^*)=1$, follows that $w_i\in  W\left(\{B_k\}_{k=1}^{\dim(\mathcal{B})} \right)$ for $i=1,\dots,\dim(\mathbb{B})$, 
	and $x\in \text{conv}\{w_i: 1\leq i\leq r\}\subset W\left(\{B_k\}_{k=1}^{\dim(\mathcal{B})} \right) $.
	Therefore $m_S\subset W\left(\{B_k\}_{k=1}^{\dim(\mathcal{B})} \right)$.
	
	\item Using an orthonormal basis of $S$ containing $\frac1{\|x\|}x\in S$ and considering Remark \ref{change of basis}, any orthonormal basis of $\calB$ satisfies that $w_i\neq 0$ for at least one $i$ (with $w_i$ as in \eqref{wi}). Therefore each convex combination of  $\{w_i\}_{i=1}^r$ cannot be zero. Therefore, $(0,\dots,0)\notin m_S$.
	
	\item Note that $\Phi^{\mathbb{B}}$ is the restriction of a linear map from $M_n^h(\C)$ to $\R^n$, and that $\calD_S$ is a compact and convex set, hence $\Phi^{\mathbb{B}}(\calD_S)$ is also compact and convex.   In the next Proposition \ref{prop 1} it is proved that $\Phi^{\mathbb{B}}(\calD_{S})=m_S$ which completes the proof.
\end{enumerate}
\end{proof}

\begin{proposition}[Equivalent formulas for $m_S$] \label{prop 1}
 	Let $S$ be a subspace of $\C^n$ with $\dim(S)=r>0$, $\mathbb{B}=\{B_k\}_{i=1}^{\dim(\calB)}$ be a fixed orthonormal basis of Hermitian matrices of a C$^*$-subalgebra $\calB\subset M_n(\C)$ and  $\Phi^{\mathbb{B}}:\calD_{S}\to\R^{\dim(\calB)}$ defined as in \eqref{defi phi}. Then the following statements are equivalent to the Definition  \ref{defi mS} of $m_{S,\mathbb{B}}$ ($=m_S)$.
 %
%
	\begin{enumerate}
			\item \label{item3Prop1}
		$\displaystyle{
			m_S=\mathop{\bigcup}\limits_{\substack{\text{o.n. basis}  \\
					\{v_i\}_{i=1}^{r} \text{of } S}}
			{\rm conv}\left\{ \left(\left(v_i v_i^*\right)_1,\dots, \left(v_i v_i^*\right)_{\dim(\mathbb{B})} \right)  \right\}_{i=1}^{r}
		}$
		\item \label{mS = diag DsubS} $m_S= \Phi^{\mathbb{B}}(\calD_{S})$.
		\item \label{item2Prop1} 		
		\begin{equation*}
			\begin{split}
				m_S=\text{conv}\left(\left\{(V_1,V_2,\right.\right.&\left.\left.\dots,V_{\dim(\calB)})\right.\right.\in\R^{\dim(\calB)}:\\
				&\left.\left.V=s s^*, \text{ for } s\in S, \|s\|=1  \right\} \right)
			\end{split}
		\end{equation*}
		or equivalently
		\begin{equation}\label{eq momento}
			\begin{split}
			m_S=\text{conv}\left\{\left(\langle B_1v,v\rangle, \cdots, \langle B_{dim \calB}v,v\rangle\right)
			\right. 
			& \in\R^{\dim(\calB)}: \\
			 & \left. v\in S, \|v\|=1 \right\}.		
			\end{split}
		\end{equation}
	
		\item \label{item4Prop1}
		$\displaystyle{
		m_S=\{(\tr(B_1Y),\dots,\tr(B_{\dim(\mathbb{B})} Y)): Y\in \mathcal{D}_S\} \ \ \text{(see \eqref{def matrices de densidad de S})}
			}$
		\item \label{item5Prop1}
		$
		\displaystyle{
		W(P_SB_1P_S,\dots,P_SB_{_{\dim(\calB)}}P_S)=\bigcup_{\varepsilon\in[0,1]} \varepsilon m_S}$, (\text{see} \eqref{def jnr})
		
		\item \label{item6Prop1}
		$\displaystyle{
		m_S=W(P_SB_1P_S,\dots,P_SB_{{\dim(\mathbb{B})}}P_S)\cap \left\{ x\in \mathbb{R}^{{\dim(\mathbb{B})}}: \sum_{i=1}^{\dim(\mathbb{B})} x_i=1 \right\}
		}$.
	\end{enumerate}
\end{proposition}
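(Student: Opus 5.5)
The plan is to make item \ref{mS = diag DsubS}, $m_S=\Phi^{\mathbb{B}}(\calD_S)$, the central identity and derive the other descriptions from it. Item \ref{item3Prop1} is just Definition \ref{defi mS} restated, and item \ref{item4Prop1} is item \ref{mS = diag DsubS} written out coordinatewise, since $\Phi^{\mathbb{B}}(Y)=(\tr(B_1Y),\dots,\tr(B_{\dim\calB}Y))$. The key observation is that $\Phi^{\mathbb{B}}$ is the restriction of a real-linear map $M_n^h(\C)\to\R^{\dim\calB}$, so it commutes with convex combinations. Moreover, for a unit vector $v$,
\[
\Phi^{\mathbb{B}}(vv^*)=(\langle B_1v,v\rangle,\dots,\langle B_{\dim\calB}v,v\rangle).
\]

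For the inclusion $m_S\subseteq\Phi^{\mathbb{B}}(\calD_S)$, take a point $\sum_i\lambda_i\Phi^{\mathbb{B}}(v_iv_i^*)$ with $\{v_i\}$ an o.n. basis of $S$. It equals $\Phi^{\mathbb{B}}(\rho)$ for $\rho=\sum_i\lambda_iv_iv_i^*\in\calD_S$. For the reverse inclusion, write $\rho\in\calD_S$ in its spectral form \eqref{equ densidad escrita usando autovals y autovecs}. Then $\Phi^{\mathbb{B}}(\rho)=\sum\lambda_i\Phi^{\mathbb{B}}(s_is_i^*)$ lies in the convex hull attached to the o.n. basis $\{s_i\}$, hence in $m_S$.

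For item \ref{item2Prop1}, let $C$ denote the convex hull of $\{\Phi^{\mathbb{B}}(vv^*): v\in S,\|v\|=1\}$. Clearly $m_S\subseteq C$. Conversely, $\calD_S$ is convex and contains every $vv^*$ with $v\in S$ a unit vector. Since $\Phi^{\mathbb{B}}$ is affine, $\Phi^{\mathbb{B}}(\calD_S)$ is convex and contains these generators, so $C\subseteq\Phi^{\mathbb{B}}(\calD_S)=m_S$. The second formula \eqref{eq momento} is the same statement written in terms of $\langle B_kv,v\rangle$.

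For item \ref{item5Prop1}, take $\rho\in\calD$ and set $\varepsilon=\tr(P_S\rho P_S)\in[0,1]$. Then
\[
\tr(\rho P_SB_kP_S)=\tr\big((P_S\rho P_S)B_k\big).
\]
If $\varepsilon>0$, then $\varepsilon^{-1}P_S\rho P_S\in\calD_S$, so the point lies in $\varepsilon m_S$. If $\varepsilon=0$, then $P_S\rho P_S=0$ and the point is $0=0\cdot m_S$. Conversely, given $\varepsilon\in[0,1]$ and $\sigma\in\calD_S$, pick a unit vector $w\perp S$ and use $\rho=\varepsilon\sigma+(1-\varepsilon)ww^*$. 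When $S=\C^n$ no such $w$ exists, and only $\varepsilon=1$ can occur; this degenerate case must be noted separately.

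Item \ref{item6Prop1} is where I expect the main obstacle. One needs a linear functional $\ell$ on $\R^{\dim\calB}$ with $\ell\equiv1$ on $m_S$ and $\ell(\varepsilon x)=\varepsilon$ on $\varepsilon m_S$; then intersecting with $\{\ell=1\}$ keeps exactly the slice $\varepsilon=1$. The natural candidate comes from $I\in\calB$. Writing $I=\sum_kc_kB_k$ with $c_k=\tr(B_k)$, and using that $P_S$ is idempotent with $P_SI P_S=P_S$, we get
\[
\sum_kc_k\tr(\rho P_SB_kP_S)=\tr(\rho P_S)=\varepsilon .
\]
So the hyperplane in item \ref{item6Prop1} should be read as $\{\sum_kc_kx_k=1\}$. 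This coincides with the stated $\sum x_i=1$ exactly when $\sum_kB_k=I$, as in the diagonal case with basis $\{e_ie_i^*\}$. I would prove the statement under that normalization and check whether the hypotheses need adjusting.

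The remaining steps are direct applications of the linearity of $\Phi^{\mathbb{B}}$ together with the spectral decomposition.
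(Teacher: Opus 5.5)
For items 1--5 your argument is essentially the paper's. Linearity of $\Phi^{\mathbb{B}}$ together with the spectral decomposition of $\rho\in\calD_S$ gives $m_S=\Phi^{\mathbb{B}}(\calD_S)$. Item 5 then follows by compressing $\rho\in\calD$ to $P_S\rho P_S$ and normalizing, and conversely by padding $\varepsilon\sigma$ with a state supported on $S^\perp$; you use $ww^*$ where the paper uses $\frac{1}{\dim S^\perp}P_{S^\perp}$. Your caveat about $S=\C^n$ points to a real defect, and the paper's proof shares it, since $P_{S^\perp}/\dim S^\perp$ is then undefined. In fact item 5 is false for $S=\C^n$ whenever $I\in\calB$. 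The right-hand side contains $0=0\cdot m_S$, while the left-hand side is $W(B_1,\dots,B_{\dim\calB})=m_{\C^n}$, which does not contain $0$ by Lemma \ref{lema properties of ms}.

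Your suspicion about item 6 is also correct, and here the paper's one-line justification (``using items 2 and 5'') fails. It tacitly needs $\sum_k\tr(YB_k)=1$ for all $Y\in\calD_S$. That holds for the basis $\{e_ie_i^*\}$ of the diagonal algebra, but not for a general orthonormal basis. The paper's own Pauli-string basis of the diagonal algebra of $M_4(\C)$ gives a counterexample. For $S=\text{span}\{e_1\}$ one gets $m_S=\{(\frac12,\frac12,\frac12,\frac12)\}$, whose coordinate sum is $2$. Hence $W(P_SB_1P_S,\dots,P_SB_4P_S)\cap\{\sum_ix_i=1\}=\{(\frac14,\frac14,\frac14,\frac14)\}\neq m_S$. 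So item 6 cannot be proved as written, and your weighted hyperplane $\{\sum_k\tr(B_k)x_k=1\}$ is the right replacement.

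That replacement does require adding $I\in\calB$ to the hypotheses. For $\calB=\C e_1e_1^*\subset M_2(\C)$ and $S=\text{span}\{e_2\}$ one has $m_S=\{0\}$, and no hyperplane $\{\ell=1\}$ can contain it. Note also that $\sum_kB_k=I$ forces $\tr(B_k)=1$ for every $k$. By Parseval this gives $n=\tr(I)=\sum_k\tr(B_k)^2=\dim\calB$. So the literal form of item 6 holds for all $S$ only when $\dim\calB=n$, as for the diagonal algebra from which it was evidently carried over.

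With these amendments your proof is complete: exclude $S=\C^n$ in item 5, and in item 6 assume $I\in\calB$ and use the weighted hyperplane. The amended item 6 then holds for every $S$, including $S=\C^n$, where $W=m_S$.
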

\begin{proof}
\begin{enumerate}

	\item[\ref{item3Prop1}.] Is \eqref{def momento de un Subesp en relacion a B} of Definition \ref{defi mS}.
	
	\item[\ref{mS = diag DsubS} $\Leftrightarrow$ \ref{item3Prop1}.] Let $\rho\in\calD_{S}$. By \eqref{equ densidad escrita usando autovals y autovecs}, there exists an orthonormal basis $\{s_i\}_{i=1}^r$ of $S$ such that
	\begin{equation} \label{eq phi de v}
		\begin{split}
		\Phi^{\mathbb{B}}(\rho)&=\sum_{i=1}^{\dim(S)} \lambda_i \Phi^{\mathbb{B}}(s_i (s_i)^*)\\
		&=\sum_{i=1}^{\dim(S)} \lambda_i \left( (s_i (s_i)^*)_1, (s_i (s_i)^*)_2, \dots, (s_{i} (s_{i})^*)_{\dim(\calB)}\right), 
	\end{split}
	\end{equation}
	with $\sum_{i=1}^{\dim(S)}\lambda_i=1$ and $\lambda_i\geq 0$. Observe that, since $s_i s_i^*\geq 0$, $\tr(s_i s_i^*)=1$ and
	\begin{equation*}
		\begin{split}
	\Phi^{\mathbb{B}}(s_i (s_i)^*)&=\left(\tr(s_is_i^*B_1), \tr(s_is_i^*B_2),..., \tr(s_is_i^*B_{\dim(\mathcal{B})}) \right)\\
	&=\left(\langle B_1s_i,s_i\rangle,\langle B_2s_i,s_i\rangle,...,\langle B_{\dim(\mathcal{B})}s_i,s_i\rangle \right),		
		\end{split}
	\end{equation*}
	then each $\Phi^{\mathbb{B}}(s_i (s_i)^*)\in W\left(\{B_k\}_{k=1}^{\dim(\mathcal{B})} \right)$.
	Then, $\Phi^{\mathbb{B}}(\rho)\in\text{conv}\{\Phi^{\mathbb{B}}(s_i (s_i)^*)\}_{i=1}^{\dim(\mathcal{B})}\Rightarrow \Phi^{\mathbb{B}}(\rho)\in m_S$ which implies that $\Phi^{\mathbb{B}}(\calD_{S})\subseteq m_S$.
		
	In order to prove the other inclusion, take $x\in m_S$. Then there exist  $\{w_i\}_{i=1}^{r}$ as in \eqref{wi} such that
	 $x=\sum_{i=1}^{r}\alpha_iw_i$, with $0\leq \alpha_i$ for all $i$ and $\sum_{i=1}^{r}\alpha_i=1$. Then, 
	 \begin{align*}
	 	x&= \sum_{i=1}^{r}\alpha_i\left( \tr(v_i v_i^* B_1),\tr(v_i v_i^* B_2),...,\tr(v_i v_i^* B_{\dim(\mathcal{B})})\right) \\
	 	&=  \left( \sum_{i=1}^{r}\alpha_i\tr(v_i v_i^* B_1),\sum_{i=1}^{r}\alpha_i\tr(v_i v_i^* B_2),...,\sum_{i=1}^{r}\alpha_i\tr(v_i v_i^* B_{\dim(\mathcal{B})})\right) \\
	 	&= \left(\tr\left( \left( \sum_{i=1}^{r}\alpha_iv_i v_i^*\right)  B_1\right) 
	 	,...,\tr\left( \left( \sum_{i=1}^{r}\alpha_iv_i v_i^*\right)  B_{\dim(\mathcal{B})}\right) \right) \\
	 	&= \left(\tr\left( \rho B_1\right) ,\tr\left(\rho  B_2\right) ,...,\tr\left( \rho B_{\dim(\mathcal{B})}\right) \right)
	 \end{align*}
	 for $\rho=\sum_{i=1}^{r}\alpha_iv_i v_i^*$.	 
	 Since $\{v_i\}_{i=1}^r\subset S$ is an orthonormal basis of $S$, follows that $\rho\geq 0$, 
	 $\tr(\rho)=1$ and then $\rho \in \mathcal{D}_S$. Therefore $x= \Phi(\rho)\in \Phi(\mathcal{D}_S)$.	 
	 
	\item[\ref{item2Prop1} $\Leftrightarrow$ \ref{item3Prop1}.] Let $x\in m_S$. By the previous item there exist  $\rho\in\calD_{S}$ such that $x=\Phi(\rho)$. Observing \eqref{eq phi de v}, we deduce that 
	\begin{equation*}
		\begin{split}
	\Phi^\mathbb{B}(\rho)\in \text{conv}  \left\{ (V_1,V_2,\dots,V_{\dim(\calB)})\in\R^{\dim(\calB)}:   
	V=s s^*, \text{ for }  
	s\in S,    
	  	    \|s\|=1  \right\}  ,		
		\end{split}
	\end{equation*}
	where $V_k=\langle V, B_k\rangle_{tr}=\tr(V B_k)$ with $\mathbb{B}=\{B_k\}_{k=1}^{\dim \calB}$ the fixed basis of $\calB$.
	
	Now we prove the reverse inclusion. Let 
	\[y\in \text{conv} \left(\left\{(V_1, \dots,V_{\dim(\calB)})\in\R^{\dim(\calB)}:V=s s^*, \text{ for } s\in S, \|s\|=1  \right\} \right).\] 
	There exist $s_j\in S$, $\|s_j\|=1$ such that 
	$	y = \sum_{j=1}^{k}\beta_js_js_j^*$,
	with $0\leq \beta_j$ for all $1\leq j\leq k$ and $\sum_{j=1}^{k}\beta_j=1$. Then,
	\begin{align*}
		y& = \sum_{j=1}^{k}\beta_j\left( (s_js_j^*)_1,(s_js_j^*)_2,..., (s_js_j^*)_{\dim(\calB)}\right) \\
		& = \sum_{j=1}^{k}\beta_j\left( \tr(s_js_j^*B_1),\tr(s_js_j^*B_2),..., \tr(s_js_j^*B_{\dim(\calB)})\right),\\
	\end{align*}
		where $\left( \tr(s_js_j^*B_1),\tr(s_js_j^*B_2),..., \tr(s_js_j^*B_{\dim(\calB)})\right)\in m_S$ for every $1\leq j\leq k$. Therefore, $y\in m_S$.
	\end{enumerate}
 
	\begin{enumerate}
		\item[\ref{item4Prop1} $\Rightarrow$ \ref{item3Prop1}.]  If $Y\in \mathcal{D}_S$, then $Y=\sum_{i=1}^r \sigma_i\, y_iy_i^*$, with $\sum_{i=1}^r\sigma_i=1$, $\sigma_i\geq 0$, and $y_i\in S$ with $y_i\perp y_j$ for $i\neq j$ ($\{y_i\}_{i=1}^r$ is an orthonormal basis of $S$). Then 
	\begin{equation*}\begin{split}
			(\tr(B_1 Y),\dots,\tr(B_{\dim(\mathbb{B})} Y))&=\sum_{i=1}^r\sigma_i \left(\tr(B_1y_iy_i^*),\dots ,\tr(B_{\dim(\mathbb{B})} y_iy_i^*)\right)\\
			&=	\sum_{i=1}^r\sigma_i \left((y_iy_i^*)_1,\dots ,(y_iy_i^*)_{\dim(\mathbb{B})}\right)
		\end{split}
	\end{equation*}
	which belongs to $\mathop{\bigcup}\limits_{\substack{\text{o.n. basis}  \\
			\{v_i\}_{i=1}^{r} \text{of } S}}
	\text{conv}\left\{ \left(\left(v_i v_i^*\right)_1,\dots, \left(v_i v_i^*\right)_{{\dim(\mathbb{B})}} \right)  \right\}_{i=1}^{r}$.
	
	\item[\ref{item3Prop1} $\Rightarrow$ \ref{item4Prop1}.] Given a convex combination $ \sum_{i=1}^r \sigma_i\ \left(\tr(B_1 y_iy_i^*),\dots,\tr(B_{\dim(\mathbb{B})} y_iy_i^*) \right)$, where $\{y_i\}_{i=1}^r$ is  an orthonormal basis of $S$, $\sum_{i=1}^r\sigma_i=1$ with $\sigma_i\geq 0$, it is easy to see that $Y=\sum_{i=1}^r \sigma_i \, y_iy_i^*\in\DS$.

	\item[\ref{item4Prop1} $\Rightarrow$ \ref{item5Prop1}.] 
	Recall that
	\begin{equation*}
		\begin{split}
			W&(P_SB_1P_S,\dots,P_SB_{{\dim(\mathbb{B})}}P_S)=\\
			&=\{\left(\tr(\rho P_SB_1P_S), \dots, \tr(\rho P_SB_{\dim(\mathbb{B})}P_S)\right)\in\R^{\dim(\mathbb{B})}:
			 \\&  
			\qquad \rho\in M_n(\C),\rho\geq 0, \tr(\rho)=1\}.
		\end{split}
	\end{equation*} 
	Also observe that choosing $\rho=\frac1{\dim(S^\perp)}P_{S^\perp}$ follows that $(0,\dots,0)\in W(P_SB_1P_S,\dots,P_SB_{{\dim(\mathbb{B})}}P_S)$. Now for $\rho\in\mathcal{D}$, if $\tr(P_S\rho P_S)\neq 0$ we can write
	\begin{equation*}
		\begin{split}
			(\tr(\rho P_SB_1 P_S),\dots,&\tr(\rho P_SB_{\dim(\mathbb{B})} P_S))=\\
			&=(\tr(P_S\rho P_SB_1 ),\dots,\tr(P_S\rho P_SB_{\dim(\mathbb{B})}))\\
			&=
			\tr(P_S\rho P_S)(\tr(\rho_S B_1 ),\dots,\tr(\rho_S B_{\dim(\mathbb{B})}))		
		\end{split}
	\end{equation*}
	where $\rho_S=\frac1{\tr(P_S\rho P_S)}P_S\rho P_S\in \calD_S$. Hence, since $\tr(P_S\rho P_S)\leq 1$, we proved that every element of $W(P_SB_1P_S,\dots,P_SB_{{\dim(\mathbb{B})}}P_S)$ can be written as $\varepsilon(\tr(\rho_S B_1 ),\dots,\tr(\rho_S B_{\dim(\mathbb{B})}))$ with $\varepsilon\in[0,1]$ and $\rho_S\in \calD_S$. Then, since $(\tr(\rho_S B_1 ),\dots,\tr(\rho_S B_{\dim(\mathbb{B})}))\in m_S$, we have proved the inclusion $\subset$ of the equality in \ref{item5Prop1}.
	
	Now we will consider the inclusion $\supset$ of \ref{item5Prop1}. Suppose $C=\varepsilon m$, with $\varepsilon\in[0,1]$ and $m\in m_S$. Then $C=\varepsilon (\tr(B_1 Y),\dots ,\tr(B_{\dim(\mathbb{B})} Y))$ with $Y\in\calD_S$. The case when $\varepsilon=0$ is trivial. Now if $\varepsilon\in(0,1]$, choose $Z= \frac{1-\varepsilon}{\varepsilon} \frac1{\dim S^\perp}P_{S^\perp}$ and consider $\rho=\varepsilon(Y+Z)$. Then $\tr(\rho)=\varepsilon (\tr(Y)+\tr(Z))=\varepsilon (1+\frac{1-\varepsilon}{\varepsilon})=1$, $\rho\geq 0$ which implies that $\rho\in\calD$. Also observe that
	\begin{equation*}
		\begin{split}
			\varepsilon(\tr(B_1 Y),\dots ,\tr(B_{\dim(\mathbb{B})} Y))&=
			\varepsilon(\tr(B_1 P_S Y P_S),\dots ,\tr(B_{\dim(\mathbb{B})} P_SY P_S)) \\
			&=\varepsilon
			(\tr(B_1 P_S (Y+Z) P_S),\dots ,\\
			& \hspace{3cm} \tr(B_{\dim(\mathbb{B})} P_S(Y+Z) P_S))\\
			&=(\tr(B_1 P_S \rho P_S),\dots ,\tr(B_{\dim(\mathbb{B})} P_S\rho P_S))
			\end{split}
	\end{equation*}
	which belongs to $W(P_SB_1P_S,\dots,P_SB_{{\dim(\mathbb{B})}}P_S)$.

	\item[\ref{item5Prop1} $\Rightarrow$ \ref{item4Prop1}.] This implication follows considering $\varepsilon=1$ and $\rho=Y\in\calD_S$ in \ref{item5Prop1}.
	
	\item[\ref{item6Prop1}.] This statement can be proved using items \ref{mS = diag DsubS} and \ref{item5Prop1} of this proposition.
\end{enumerate}
\end{proof}

\begin{remark}
Items \ref{item5Prop1} and \ref{item6Prop1} of Proposition \ref{prop 1} prove that $m_S$ is a face of $W(P_SB_1P_S,\dots,P_SB_{{\dim(\mathbb{B})}}P_S)$.
\end{remark}

Observe that a change of basis affects the moment only up to a unitary transformation, as will be made precise in the next proposition.

\begin{proposition}[Change of basis in $\calB$] \label{propo: change of basis moment}
	Let $S$ be a subspace of $\C^n$, $\mathbb{B}=\{B_k\}_{k=1}^{\dim(\calB)}$ and $\mathbb{E}=\{E_k\}_{k=1}^{\dim(\calB)}$, be two different orthonormal basis of Hermitian matrices of a $C^*$-subalgebra $\calB\subset M_n(\C)$. Then,
	\begin{equation}\label{change of basis moment}
		m_S^{\mathbb{B}}= C_{\mathbb{E},\mathbb{B}}m_S^{\mathbb{E}}
	\end{equation}
	and
	\begin{equation}\label{change of basis jnr}
		W_{S,\mathbb{B}}=C_{\mathbb{E},\mathbb{B}}W_{S,\mathbb{E}}
	\end{equation}
	where $C_{\mathbb{E},\mathbb{B}}$ is the unitary matrix of $\dim(\calB)\times\dim(\calB)$ defined in Remark \ref{change of basis}. 
\end{proposition}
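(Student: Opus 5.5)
The plan is to reduce both identities to the pointwise relation already obtained in Remark \ref{change of basis}, namely $\Phi^{\mathbb{B}}(\rho)=C_{\mathbb{E},\mathbb{B}}\Phi^{\mathbb{E}}(\rho)$. That computation uses only linearity of the trace and the expansion $B_k=\sum_i\beta_i^kE_i$ with real coefficients. It therefore holds for every Hermitian $\rho$ (in particular every $\rho\in\calD$), and not only for $\rho\in\calD_S$.

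For \eqref{change of basis moment} I will invoke item \ref{mS = diag DsubS} of Proposition \ref{prop 1}, which gives $m_S^{\mathbb{B}}=\Phi^{\mathbb{B}}(\calD_S)$ and $m_S^{\mathbb{E}}=\Phi^{\mathbb{E}}(\calD_S)$. Applying the pointwise identity to each $\rho\in\calD_S$ yields
\[
m_S^{\mathbb{B}}=\{C_{\mathbb{E},\mathbb{B}}\Phi^{\mathbb{E}}(\rho):\rho\in\calD_S\}=C_{\mathbb{E},\mathbb{B}}\,\Phi^{\mathbb{E}}(\calD_S)=C_{\mathbb{E},\mathbb{B}}\,m_S^{\mathbb{E}}.
\]
Both inclusions are immediate because the same $\rho$ parametrizes both sets, so no invertibility argument is needed.

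For \eqref{change of basis jnr} I interpret $W_{S,\mathbb{B}}$ as $W(P_SB_1P_S,\dots,P_SB_{\dim(\calB)}P_S)$, and similarly for $\mathbb{E}$. Since $P_SB_kP_S=\sum_i\beta_i^kP_SE_iP_S$ with the same real coefficients, for each $\rho\in\calD$ the vector $(\tr(\rho P_SB_kP_S))_k$ equals $C_{\mathbb{E},\mathbb{B}}$ applied to $(\tr(\rho P_SE_iP_S))_i$. Ranging over $\rho\in\calD$ gives the equality of sets. Alternatively, I can use item \ref{item5Prop1} of Proposition \ref{prop 1}, writing $W_{S,\mathbb{B}}=\bigcup_{\varepsilon\in[0,1]}\varepsilon m_S^{\mathbb{B}}$, and deduce the result from the first part together with linearity of $C_{\mathbb{E},\mathbb{B}}$.

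No step is a genuine obstacle. The only points needing care are bookkeeping.
\begin{itemize}
\item The coefficients $\beta_i^k$ must be real, so that the conjugates in Remark \ref{change of basis} disappear and the matrix is real orthogonal. This holds because both bases consist of Hermitian matrices.
\item The index convention of $C_{\mathbb{E},\mathbb{B}}$ must match the one fixed in Remark \ref{change of basis}.
\item The pointwise identity must be checked to hold on all of $\calD$, not just on $\calD_S$, since the second identity needs it there.
\end{itemize}
Unitarity of $C_{\mathbb{E},\mathbb{B}}$ follows from orthonormality of both bases, but it is not needed for the set equalities themselves.
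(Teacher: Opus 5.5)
Your proof is correct. For the moment identity it is exactly the paper's argument: both use $m_S=\Phi(\calD_S)$ from Proposition \ref{prop 1} together with the pointwise relation $\Phi^{\mathbb{B}}=C_{\mathbb{E},\mathbb{B}}\Phi^{\mathbb{E}}$ from Remark \ref{change of basis}.

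For the joint numerical range, the paper takes the route you list as your alternative. It writes $W_{S,\mathbb{B}}=\{\varepsilon x: x\in m_S^{\mathbb{B}},\ \varepsilon\in[0,1]\}$ via item 5 of Proposition \ref{prop 1} and pulls $C_{\mathbb{E},\mathbb{B}}$ out by linearity. Your primary route instead expands $P_SB_kP_S=\sum_i\beta_i^kP_SE_iP_S$ and applies the same pointwise identity to every $\rho\in\calD$.

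Your version is marginally more robust because it avoids item 5. That item's proof uses $P_{S^\perp}/\dim S^\perp$, and the item actually fails for $S=\C^n$ when $I\in\calB$. In that case $0\notin W(B_1,\dots,B_{\dim\calB})$ but $0\in\bigcup_{\varepsilon}\varepsilon m_S$. So your argument covers every subspace $S$, while the paper's is shorter once Proposition \ref{prop 1} is available.
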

\begin{proof}
	Combining Remark \ref{change of basis} and item 1 in Proposition \ref{prop 1}  follows that $$m_S^{\mathbb{B}}=\Phi^{\mathbb{B}}(\calD_S)= C_{\mathbb{E},\mathbb{B}}\Phi^{\mathbb{E}}(\calD_S)=C_{\mathbb{E},\mathbb{B}}m_S^{\mathbb{E}}.$$
	Additionally, \eqref{change of basis jnr} is a direct consequence of \eqref{change of basis moment} and item 5 of Proposition \ref{prop 1}, since
	\begin{align*}
		W_{S,\mathbb{B}}&= \{\varepsilon x': x'\in m_S^{\mathbb{B}}, \varepsilon\in[0,1]\}\\
		&= \{\varepsilon C_{\mathbb{E},\mathbb{B}}x: x\in m_S^{\mathbb{E}}, \varepsilon\in[0,1]\}\\
		&= C_{\mathbb{E},\mathbb{B}}W_{S,\mathbb{E}}.
	\end{align*}	
\end{proof}

\begin{theorem} \label{teo equiv JNR y minimal}
Let $A\in M_n^h(\C)$, with $\pm\|A\|\in\sigma(A)$, $\calB$ a C$^*$-subalgebra of $M_n(\C)$ with the same unit $I$, $P_+$, $P_-$ the orthogonal projections corresponding to the eigenspaces $E_+$, $E_-$ of the eigenvalues $+\|A\|$ and $-\|A\|$ respectively, $\calD_{E_+}=\{\rho\in \calD:\  \ran(\rho)\subset E_+\}$, $\calD_{E_-}=\{\rho\in \calD: \ \ran(\rho)\subset E_-\}$ and	$\mathbb{B}=\{B_k\}_{k=1}^{\dim\calB}$ a Hermitian orthonormal basis of $\calB$ with the inner product defined by the trace.

The following statements are equivalent
\begin{enumerate}[label=(\roman*),ref=\textit{(\roman*)}] 
	\item \label{item (1) teo 2} $W\left(\{P_+ B_k P_+\}_{k=1}^{\dim\calB}\right)\cap W\left(\{P_- B_k P_-\}_{k=1}^{\dim\calB}\right)\neq \{(0,\dots,0)\}$.
	\item \label{item (2) teo 2}  There exist $\rho_+\in \calD_{E_+}$ and $\rho_-\in \calD_{E_-}$ such that $\tr(\rho_+B_k )=\tr(\rho_-B_k )$, for all $k=1,\dots,\dim\calB$.
	
	\item \label{item (3) teo 2} $m_{E_+}\cap m_{E_-}\neq \emptyset$.
	\item \label{item (4) teo 2} 
	There exist a Hermitian $X=\left(\begin{smallmatrix}
		X_{1,1}&0&0\\ 0&X_{2,2}&0\\0&0&0
	\end{smallmatrix}\right)$ (in the orthogonal decomposition used in \eqref{equ A y X en descomp ortog de A})  and $X\in\calB_\tr^\perp\setminus\{0\}$ such that $A X =\|A\| \left| X\right|$.

	\item \label{item (5) teo 2} $A=\|A\|(P_+-P_-)+R$ with $R\in M_n^h(\C)$,   $\ran(R)\subset \ran(I-P_+-P_-)$ and $m_{E+}\cap m_{E_-}\neq \emptyset$.
	\item \label{item (6) teo 2} $A$ is $\calB$-minimal
\end{enumerate}
\end{theorem}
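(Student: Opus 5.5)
The plan is to prove the chain \ref{item (6) teo 2}$\Leftrightarrow$\ref{item (4) teo 2}$\Leftrightarrow$\ref{item (2) teo 2}$\Leftrightarrow$\ref{item (3) teo 2}$\Leftrightarrow$\ref{item (1) teo 2}, and then to dispose of \ref{item (5) teo 2} separately. We may assume $\|A\|=1$; if $\|A\|=0$ then $A=0$ is trivially minimal, and the other degenerate situations are handled by the hypothesis $\pm\|A\|\in\sigma(A)$.

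\emph{\ref{item (6) teo 2}$\Leftrightarrow$\ref{item (4) teo 2}.} This is Theorem \ref{teo X mod} together with the block form \eqref{equ A y X en descomp ortog de A}. For the reverse implication, the existence of a nonzero Hermitian $X\in\calB_\tr^\perp$ with $AX=\|A\||X|$ gives minimality directly by Theorem \ref{teo X mod}. If one wants to avoid that citation, there is a direct argument. Set $Y=|X|/\tr|X|\in\calD$. For $B\in\calB^h$,
\[\|A+B\|\ \ge\ |\tr((A+B)\,\mathrm{sgn}(X)Y)|\ =\ |\tr(AX)+\tr(BX)|/\tr|X|\ =\ 1,\]
using that $\tr(BX)=0$, that $\tr(AX)=\tr|X|$, and that $\mathrm{sgn}(X)Y$ has trace norm $1$. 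Here $\mathrm{sgn}(X)Y=X/\tr|X|$. The Hermitian reduction from the introduction then covers non-Hermitian $B$.

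\emph{\ref{item (4) teo 2}$\Rightarrow$\ref{item (2) teo 2}.} This is exactly the computation after the Remark. We have $\tr(X_{1,1})=-\tr(X_{2,2})\ne0$, and setting $\rho_+=X_{1,1}/\tr X_{1,1}$ and $\rho_-=X_{2,2}/\tr X_{2,2}$ we get $\rho_\pm\in\calD_{E_\pm}$ with equal $\calB$-coordinates.

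\emph{\ref{item (2) teo 2}$\Rightarrow$\ref{item (4) teo 2}.} Define $X=\rho_+-\rho_-$. Then $X$ is Hermitian and nonzero, since $E_+\perp E_-$ and both $\rho_\pm$ have trace $1$. It lies in $\calB_\tr^\perp$ because the basis coordinates agree, and it has the required block form. Moreover $|X|=\rho_++\rho_-$, while $AX=\rho_+ +\rho_-$ since $A$ acts as $\pm1$ on $E_\pm$.

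\emph{\ref{item (2) teo 2}$\Leftrightarrow$\ref{item (3) teo 2}.} This is immediate from Proposition \ref{prop 1}, item \ref{item4Prop1}, which identifies $m_{E_\pm}=\Phi^{\mathbb{B}}(\calD_{E_\pm})$.

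\emph{\ref{item (3) teo 2}$\Leftrightarrow$\ref{item (1) teo 2}.} Here we use Proposition \ref{prop 1}, item \ref{item5Prop1}, namely $W(\{P_\pm B_kP_\pm\})=\bigcup_{\varepsilon\in[0,1]}\varepsilon\, m_{E_\pm}$. In the direction \ref{item (3) teo 2}$\Rightarrow$\ref{item (1) teo 2}, a common point $m$ of the two moments lies in both joint numerical ranges. It is nonzero by Lemma \ref{lema properties of ms}(2), since $I\in\calB$.

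The converse \ref{item (1) teo 2}$\Rightarrow$\ref{item (3) teo 2} is the main obstacle, because a common nonzero point has the form $\varepsilon_+m_+=\varepsilon_-m_-$ and we must show $\varepsilon_+=\varepsilon_-$. I would use the identity coordinate. Write $I=\sum_k c_kB_k$ with real $c_k$; for instance, with the basis of the Remark, $B_1=I/\sqrt n$. Then for any $m\in m_S$, since $m=\Phi^{\mathbb{B}}(\rho)$ with $\tr\rho=1$, the linear functional $\ell(x)=\sum_k c_kx_k$ satisfies $\ell(m)=\tr(\rho I)=1$. Applying $\ell$ gives $\varepsilon_+=\varepsilon_-=:\varepsilon$. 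Since the common point is nonzero, $\varepsilon>0$, and dividing by $\varepsilon$ gives $m_+=m_-\in m_{E_+}\cap m_{E_-}$. This is also the correct reading of item \ref{item6Prop1}, where the hyperplane should really be $\ell=1$ in a general basis.

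\emph{\ref{item (5) teo 2}.} The decomposition $A=\|A\|(P_+-P_-)+R$ with $R=A(I-P_+-P_-)$ always holds by the spectral theorem: $R$ is Hermitian and its range lies in $\ran(I-P_+-P_-)$. Hence \ref{item (5) teo 2}$\Leftrightarrow$\ref{item (3) teo 2} trivially.
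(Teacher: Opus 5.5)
Your proof is correct and uses the same ingredients as the paper's. These are Theorem~\ref{teo X mod} for (iv)$\Leftrightarrow$(vi); normalizing the diagonal blocks of $X$ (and conversely taking $X=\rho_+-\rho_-$) for (ii)$\Leftrightarrow$(iv); $m_S=\Phi^{\mathbb{B}}(\calD_S)$ for (ii)$\Leftrightarrow$(iii); and the identity coordinate to match the scalings in (i). The routing differs in two places.

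For (i), you work with moments via Proposition~\ref{prop 1}(5) and the functional $\ell(x)=\sum_k\tr(B_k)x_k$. The paper instead computes $\tr(P_+\rho P_+)=\tr(P_-\mu P_-)$ directly. In doing so you make explicit that the common point is nonzero, which is what (ii)$\Rightarrow$(i) needs and what the paper simply calls trivial. Your remark that Proposition~\ref{prop 1}(6) needs the hyperplane $\ell=1$ rather than $\sum_i x_i=1$ in a general basis is also correct.

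For (v), you note that $A=\|A\|(P_+-P_-)+R$ with $R=A(I-P_+-P_-)$ always holds, so (v) is literally (iii). The paper instead proves (iv)$\Leftrightarrow$(v) by expanding $X$ in eigenvectors and rebuilding a common point of $m_{E_+}$ and $m_{E_-}$, which in effect reproves (iv)$\Leftrightarrow$(iii). Your version is shorter and shows that the structural part of (v) carries no content.

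Your trace-duality argument is a self-contained proof of (iv)$\Rightarrow$(vi). It works for every $B\in\calB$, since $\tr(BX)=0$ there, so the Hermitian reduction is unnecessary. The converse (vi)$\Rightarrow$(iv) still relies on the cited result.

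Finally, your opening sentence does not actually dispose of $A=0$. There $E_+=E_-=\C^n$, and for $\calB=M_n(\C)$ item (vi) holds while (iv) fails because $\calB_{\tr}^\perp=\{0\}$. The statement tacitly assumes $A\neq 0$, as does the paper, and you should simply assume it.
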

\begin{proof}
	\ref{item (1) teo 2} $\Rightarrow$ \ref{item (2) teo 2}  In this case there exists  $0\leq\rho, \mu\in M_n(\C)$ with $\tr(\rho)=\tr(\mu)=1$ that satisfy $\tr(P_+\rho P_+ B_k )=\tr(P_-\mu P_- B_k ) $ for every $k=1,\dots, \dim(\calB)$ and at least a $k_0$ such that $\tr(P_+\rho P_+ B_{k_0})\neq 0$. 
	Then, since $I=\sum_{k=1}^{\dim \calB} \tr(I B_k ) B_k$, follows that 
	\begin{equation*}
	\begin{split}
	  \tr(P_+\rho P_+ )&= \tr\left(P_+\rho P_+  \sum_{k=1}^{\dim \calB} \tr(B_k ) B_k\right)=\sum_{k=1}^{\dim \calB} \tr(B_k )\ \tr\left(P_+\rho P_+   B_k\right)\\
	&=\sum_{k=1}^{\dim \calB} \tr(B_k )\ \tr\left(P_-\mu P_-   B_k\right)=
	 \tr\left(P_-\mu P_-  \sum_{k=1}^r \tr(B_k ) B_k\right)\\
	 &=\tr\left(P_-\mu P_- \right)
	\end{split}
	\end{equation*}
	Now suppose that $\tr\left(P_+\rho P_+\right)=0$, then since $P_+\rho P_+\geq 0$ follows that $P_+\rho P_+=0$, and hence $P_+\rho P_+B_{k_0}=0$, a contradiction. Therefore $\tr\left(P_+\rho P_+\right)>0$ holds and we can define
	\begin{equation*}
		\begin{split}
			\rho_+ =\frac{P_+\rho P_+}{\tr\left(P_+\rho P_+\right)}\ \in \calD_{E_+}\ \ and \  
			\mu_- =\frac{P_-\mu P_-}{\tr\left(P_-\mu P_-\right)}\ \in \calD_{E_-}
		\end{split}
	\end{equation*}
	which clearly satisfy that $\tr(\rho_+  B_k )=\tr(\mu_-B_k)$ for all $k=1,\dots,\dim\calB$. These last equalities clearly imply that statement \ref{item (2) teo 2} holds.
	
 \medspace
 
 	\ref{item (2) teo 2} $\Rightarrow$ \ref{item (1) teo 2} Is trivial choosing $\rho=\rho_+$ and $\mu=\rho_-$.
 
 \medspace

\ref{item (2) teo 2} $\Leftrightarrow$ \ref{item (3) teo 2} Follows directly using \ref{mS = diag DsubS} of Proposition \ref{prop 1}.

 \medspace

\ref{item (4) teo 2} $\Rightarrow$ \ref{item (2) teo 2} Since $X_{1,1}$, $X_{2,2}$ are not null we can suppose that $X=\left(\begin{smallmatrix}
	X_{1,1}&0&0\\ 0&X_{2,2}&0\\0&0&0
\end{smallmatrix}\right)$ has $\|X\|_1=1$. Then choosing $\rho_+=\frac1{\tr(X_{1,1})} X_{1,1}$ and $\rho_-=\frac1{\tr(X_{2,2})} X_{2,2}$ (see \eqref{equ A y X en descomp ortog de A}) follows that $\rho_+\in\calD_{E_+}$ and $\rho_-\in\calD_{E_-}$ and \ref{item (2) teo 2} holds. 

\ref{item (2) teo 2} $\Rightarrow$ \ref{item (4) teo 2}  Using $\rho_+$ and $\rho_-$ from \ref{item (2) teo 2} we can construct $X=\left(\begin{smallmatrix}
	\rho_+&0&0\\ 0&-\rho_-&0\\0&0&0
\end{smallmatrix}\right)$ that satisfies \ref{item (4) teo 2}.

\ref{item (4) teo 2} $\Rightarrow$ \ref{item (5) teo 2} 
We can write $X=\sum_{i=1}^l\alpha_i v_i (v_i)^*+\sum_{j=1}^m\beta_j w_j(w_j)^*$, with $v_i\in E_+$, $w_j\in E_-$ norm one eigenvectors of $X$, and $\alpha_i\in[0,1]$, $\beta_j\in[-1,0]$ its respective eigenvalues with and $\sum_{i=1}^l\alpha_i +\sum_{j=1}^m\beta_j=0$. If we take $X$ such that $\|X\|_1=1$, then $\sum_{i=1}^l\alpha_i +\sum_{j=1}^m(-\beta_j)=1$. Therefore, since $X\in \calB_{\tr}$, $0=\langle X,B_k\rangle=\sum_{i=1}^l\alpha_i \left(v_i (v_i)^*\right)_k+\sum_{j=1}^m\beta_j\left( w_j(w_j)^*\right)_k$, for every $k=1,\dots,\dim \calB$, and hence
\begin{equation}
	\label{eq igualdad de elem de momentos}
	\begin{split}
		\sum_{i=1}^l 2\alpha_i & \left(\left(v_i (v_i)^*\right)_1,\dots, \left(v_i (v_i)^*\right)_{\dim \calB}\right)=\\
		&=\sum_{j=1}^m(-2\beta_j)\left(\left( w_j(w_j)^*\right)_1,\dots,\left( w_j(w_j)^*\right)_{\dim\calB}\right)
		\end{split}
\end{equation}
which proves that $m_{E+}\cap m_{E_-}\neq \emptyset$ and then \ref{item (5) teo 2} holds.

\ref{item (5) teo 2} $\Rightarrow$ \ref{item (4) teo 2} If there is an element  $\varepsilon\in m_{E+}\cap m_{E_-}$, then there exist orthogonal elements $\{v_i\}_{i=1}^l\subset E_+$ and 
$\{w_j\}_{j=1}^m\subset E_-$, $\alpha_j$ and $\beta_j$, such that \eqref{eq igualdad de elem de momentos} holds and it is equal to $\varepsilon$. This allows the construction of $X=\sum_{i=1}^l\alpha_i v_i (v_i)^*+\sum_{j=1}^m\beta_j w_j(w_j)^*$ that satisfies \ref{item (4) teo 2}.

\ref{item (4) teo 2} $\Leftrightarrow$ \ref{item (6) teo 2} Is Theorem \ref{teo X mod}.
 \end{proof}

	\begin{corollary}\label{coro: equival def soporte}
		Let $V, W$ be two non trivial orthogonal subspaces of $\C^n$, $\mathcal{B}$ a C$^*$-subalgebra of $M_n(\C$) with unit $I=I_n$ and $\mathbb{B}=\{B_k\}_{k=1}^{\dim(\calB)}$ an orthogonal basis of $\calB$,
		then the following statements are equivalent.
		\begin{enumerate}
			\item $(V,W)$ is a support, that is, $m_V\cap m_W\neq \emptyset$ (see \cite{soportes}).
			\item \label{item 2 corol soportes} The Hermitian matrix $M=\lambda (P_V-P_W)+R$ is $\mathcal{B}$-minimal  for every $\lambda\in\R_{>0}$, $R\in M_n^h(\C)$, $\|R\|\leq \lambda$, $R(P_V +P_W)=0$.
 
			\item The sets 	
			$
			\mathcal{D}_V=\{\rho\in M_n^h(\C):P_V \rho=\rho\geq 0,\ \tr(\rho)=1\}$ and $\mathcal{D}_W=\{\mu\in M_n^h(\C): P_W \mu=\mu\geq 0,\ \tr(\mu)=1\}$ (see \eqref{def matrices de densidad de S}), 
			satisfy  
			$$
			\Phi^\mathbb{B}(\calD_V)\cap\Phi^{\mathbb{B}}(\calD_W)\neq\emptyset .
			$$
		\end{enumerate}
	\end{corollary}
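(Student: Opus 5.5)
The corollary is essentially Theorem \ref{teo equiv JNR y minimal} stated for an arbitrary pair of orthogonal subspaces rather than for the extreme eigenspaces of a given $A$. The plan is to prove $1\Leftrightarrow 3$ directly from Proposition \ref{prop 1}, and then $1\Leftrightarrow 2$ by applying Theorem \ref{teo equiv JNR y minimal} to the matrices $M$ of item \ref{item 2 corol soportes}.

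\emph{Step 1: $1\Leftrightarrow 3$.} By item \ref{mS = diag DsubS} of Proposition \ref{prop 1}, $m_V=\Phi^{\mathbb{B}}(\calD_V)$ and $m_W=\Phi^{\mathbb{B}}(\calD_W)$. The condition $P_V\rho=\rho\geq 0$ is the same as $\ran(\rho)\subset V$ with $\rho\ge0$, so these are the sets \eqref{def matrices de densidad de S}. The two conditions therefore coincide. If $\mathbb{B}$ is only orthogonal rather than orthonormal, a diagonal rescaling of coordinates preserves nonemptiness of the intersection, in the spirit of Remark \ref{change of basis}.

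\emph{Step 2: $1\Rightarrow 2$.} Fix $\lambda>0$ and $R$ Hermitian with $\|R\|\le\lambda$ and $R(P_V+P_W)=0$. Since $R$ is Hermitian, also $(P_V+P_W)R=0$. So $R$ lives on $(V\oplus W)^\perp$, and $M$ is block diagonal with blocks $\lambda I_V$, $-\lambda I_W$ and $R$. Hence $\|M\|=\lambda$ and $\pm\lambda\in\sigma(M)$.

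The eigenspaces of $\pm\lambda$ are $E_+=V\oplus V'$ and $E_-=W\oplus W'$, where $V'$ and $W'$ are the $\pm\lambda$ eigenspaces of $R$ inside $(V\oplus W)^\perp$. They may be strictly larger than $V,W$ when $\|R\|=\lambda$. However, $\calD_V\subset\calD_{E_+}$ and $\calD_W\subset\calD_{E_-}$, so $m_V\subset m_{E_+}$ and $m_W\subset m_{E_-}$. Thus $m_{E_+}\cap m_{E_-}\neq\emptyset$, and Theorem \ref{teo equiv JNR y minimal} (iii)$\Rightarrow$(vi) gives that $M$ is $\calB$-minimal.

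\emph{Step 3: $2\Rightarrow 1$.} Take $\lambda=1$ and $R=0$, so $M=P_V-P_W$. Then $E_+=V$ and $E_-=W$ exactly, and $\pm1\in\sigma(M)$ because $V,W$ are nontrivial. Minimality of $M$ together with Theorem \ref{teo equiv JNR y minimal} (vi)$\Rightarrow$(iii) yields $m_V\cap m_W\neq\emptyset$.

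The main point needing care is Step 2: checking that the possible enlargement of the extreme eigenspaces, when $\|R\|=\lambda$, does no harm. It does not, because it only enlarges the moments through the monotonicity $S\subset S'\Rightarrow m_S\subset m_{S'}$, which is immediate from $\calD_S\subset\calD_{S'}$. The other routine point is verifying the hypotheses of Theorem \ref{teo equiv JNR y minimal}: the common unit $I$ in $\calB$, and $\pm\|M\|\in\sigma(M)$.
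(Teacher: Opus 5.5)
Your proof is correct and takes the same route as the paper, which gets $1\Leftrightarrow 3$ from item \ref{mS = diag DsubS} of Proposition \ref{prop 1} and $1\Leftrightarrow 2$ from Theorem \ref{teo equiv JNR y minimal}. Your version is more careful than the paper's one-line argument in two ways: you explicitly handle the possible enlargement of the $\pm\lambda$-eigenspaces of $M$ when $\|R\|=\lambda$ via the monotonicity $m_V\subset m_{E_+}$, $m_W\subset m_{E_-}$, and you invoke \ref{item (3) teo 2}$\Leftrightarrow$\ref{item (6) teo 2}, which is what actually yields minimality, rather than only the \ref{item (3) teo 2}$\Leftrightarrow$\ref{item (5) teo 2} that the paper cites.
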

\begin{proof} The equivalence between 1. and 2. follows from the equivalence of the items \ref{item (3) teo 2} and \ref{item (5) teo 2} in Theorem \ref{teo equiv JNR y minimal}, and between 1. and 3. is obtained from \ref{mS = diag DsubS}. of Proposition \ref{prop 1}.
\end{proof}

  \begin{remark}
  The previous result and Theorem~\ref{teo equiv JNR y minimal} prove that, when $I\in\calB$, all $\calB$-minimal matrices are of the form given in item~\ref{item 2 corol soportes} of Corollary~ \ref{coro: equival def soporte}. 	
  \end{remark}
 
\section{Relations between subdifferentials and moment} \label{section subdiff and moment}
In this section, we follow the main ideas of \cite{overton2} and \cite{BV-SDP} to characterize the moment of the eigenspace of the maximum eigenvalue of a minimal matrix in terms of the subdifferentials of the same eigenvalue and the operator norm.

First, we introduce the definition of subdifferential of a convex function, as was given in \cite{bata-grover}.

\begin{definition}\label{def subdiferencial}
	For any convex function $f:\mathcal{X}\to \R$ defined on a Banach space $\mathcal{X}$, and $\mathcal{X}^*$ its dual, it can be defined the subdifferential of $f$ at $x\in\mathcal{X}$ as
	\begin{equation}\label{subdif gral banach}
		\partial f(x)=\{v\in\mathcal{X}^*: f(y)-f(x)\geq \text{Re } v(y-x), \forall\ y\in \mathcal{X}\}, 
	\end{equation}
\end{definition}
In particular, for the Hermitian matrix space $\mathcal{X}=M_n^{h}(\C)$, the subdifferential of $f(\cdot)=\lambda_{max}(\cdot)$ at $x=A\in M_n^h(\C)$ is
\begin{align*}
	\partial \lambda_{max}(A)=& \left\lbrace V\in M_n^{h}(\C): \lambda_{max}(Y)-\lambda_{max}(A)\geq \text{Re}\left(  \tr(V(Y-A))\right),\right. \\ 
	&\left. \text{for every } Y\in M_n^{h}(\C) \right\rbrace .
\end{align*}
A more practical (or useful) formula for the subdifferential of $\lambda_{max}$ at $A$, proved first for symmetric real matrices in \cite{overton2} and later extended por Hermitian in \cite{BV-SDP}, is
\begin{eqnarray} \label{subdif matrices}
	\partial\lambda_{max}(A)&=&\text{co}\{qq^*: Aq=\lambda_{max}(A)q \text{ and }\|q\|=1\}\\ \nonumber
	&=&\{Q_{max}R_sQ_{max}^*: R_s\in M_s^{h}(\C), R_s\geq 0, \tr(R_s)=1\},
\end{eqnarray}
where the columns of $Q_{max}$ form an orthonormal set of $s$ eigenvectors for $\lambda_{max}(A)$ (an orthonormal basis of the eigenspace of $\lambda_{max}(A)$). Observe that $Q_{max}$ depends on the matrix $A$. 

From now we denote as $\calD^s$ the set of density matrices of $s\times s$.

Now consider $A(x)=A_0+\sum_{k=1}^{\dim{\calB}}x_kB_k$, with $A_0\in  M_n^{h}(\C)$, $\{B_k\}_{k=1}^{\dim{\calB}}$ be a fixed Hermitian basis of $\mathbb{B}^h$ and $x\in \R^{\dim{\calB}}$. The maximum eigenvalue of $A(x)$, $\lambda_{max}(A(x))$, is a map from $\R^{\dim{\calB}}$ to $\R$ and a composition between a smooth function $A(\cdot)$ and a convex map $\lambda_{max}(\cdot)$. For every $k$, the partial derivatives of $A$ at $x$ are
$$\frac{\partial A}{\partial x_k}(x)=B_k.$$
Adapting Theorem 3 in \cite{overton2} to the Hermitian case, the subdifferential of $\lambda_{max}(A(x))$ is
\begin{equation} \label{subdif A(x)}
	\begin{split}
	\partial\left(\lambda_{max}(A(x))\right)=\left\lbrace v\in \R^{\dim{\calB}}:\ v_k=\tr(R_s Q_{max}(x)^*B_kQ_{max}(x)),\right. \\
	\left. \text{with } R_s\in \calD^s\right\rbrace ,	
	\end{split}	
\end{equation}
where $s$ is the multiplicity of $\lambda_{max}(A(x))$ and the columns of $Q_{max}(x)$ form an orthonormal basis of eigenvectors for $\lambda_{max}(A(x))$. Observe that $Q_{max}(x)$ depends on $A(x)$ and $v \in W\left(\{B_k\}_{k=1}^{\dim(\calB)} \right)$ if $v\in\partial\left(\lambda_{max}(A(x))\right)$. Then,
\[\partial\left(\lambda_{max}(A(x))\right)\subseteq W\left(\{B_k\}_{k=1}^{\dim(\calB)} \right).\]
Since $\lambda_{max}(A(x))=\lambda_{max}\circ A(x)$, the generalized derivative and generalized gradient coincide with the directional derivative and subdifferential, respectively (see Proposition 2.2.7 in \cite{clarke1}). Then, the expression \eqref{subdif A(x)} is valid by the chain rule for subdifferentials, and is useful to obtain the following characterization of the subdifferential and the partial derivative of the maximum eigenvalue of $A(x)$.

\begin{theorem}\label{teo subdif=ms}
Let $A(x)=A_0+\sum_{k=1}^{\dim{\calB}}x_kB_k$, with $A_0\in  M_n^{h}(\C)$, $\{B_k\}_{k=1}^{\dim{\calB}}$ be a fixed Hermitian basis of $\mathcal{B}^h$, $x\in \R^{\dim{\calB}}$ and $S_{max}$ be the eigenspace of $\lambda_{max}(A(x))$. Then
\begin{equation}\label{subdiff l1 = momento}
			\partial\big(\lambda_{max}(A(x))\big)=\Phi\big(\partial\lambda_{max}(A(x))\big)=\Phi(\mathcal{D}_{S_{max}})= m_{S_{max}},
\end{equation}
where $m_{S_{max}}$ is the moment of the eigenspace $S_{max}$ and $\Phi:\calD_{S}\to\R^{\dim(\calB)}$ defined as in \eqref{defi phi}.	

Additionally, the directional derivative of $\lambda_{max}$ at $x\in \R^{\dim{\calB}}$ in the direction $w=(w_1,w_2,...,w_{\dim{\calB}})\in \R^{\dim{\calB}}$ that is defined by
\begin{equation} \label{partial der}
	\begin{split}
\lambda'_{max}(x,w)&=\lim\limits_{t\to 0^+}\dfrac{\lambda_{max}(A(x+tw))-\lambda_{max}(A(x))}{t}\\
&= \lambda_{max}\left(\sum_{k=1}^{\dim{\calB}}w_kQ_{max}(x)^*B_kQ_{max}(x) \right),
\end{split}
\end{equation}
where the columns of $Q_{max}(x)$ form an orthonormal basis of eigenvectors for $\lambda_{max}(A(x))$.
\end{theorem}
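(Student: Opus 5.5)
We will prove the three equalities in \eqref{subdiff l1 = momento} from left to right, and then derive the directional derivative formula from the support function of the subdifferential.

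\textbf{First equality.} The chain-rule expression \eqref{subdif A(x)}, stated just before the theorem, says that $v\in\partial\big(\lambda_{max}(A(x))\big)$ exactly when $v_k=\tr(R_sQ_{max}(x)^*B_kQ_{max}(x))$ for some $R_s\in\calD^s$. By cyclicity of the trace,
\[
v_k=\tr\big(Q_{max}(x)R_sQ_{max}(x)^*B_k\big).
\]
By \eqref{subdif matrices}, the matrices $V=Q_{max}(x)R_sQ_{max}(x)^*$ with $R_s\in\calD^s$ are precisely the elements of $\partial\lambda_{max}(A(x))\subset M_n^h(\C)$. So $v=\Phi(V)$, and as $R_s$ ranges over $\calD^s$ we obtain $\partial\big(\lambda_{max}(A(x))\big)=\Phi\big(\partial\lambda_{max}(A(x))\big)$.

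\textbf{Second equality.} We show $\partial\lambda_{max}(A(x))=\calD_{S_{max}}$ as subsets of $M_n^h(\C)$.
\begin{itemize}
\item[($\subseteq$)] Every $Q_{max}R_sQ_{max}^*$ is positive semidefinite, has range contained in $\ran(Q_{max})=S_{max}$, and has trace $\tr(R_sQ_{max}^*Q_{max})=\tr(R_s)=1$, because $Q_{max}^*Q_{max}=I_s$.
\item[($\supseteq$)] Given $\rho\in\calD_{S_{max}}$, set $R_s=Q_{max}^*\rho Q_{max}$. Then $R_s\geq0$. Since $Q_{max}Q_{max}^*=P_{S_{max}}$ and $P_{S_{max}}\rho P_{S_{max}}=\rho$, we get $\tr(R_s)=\tr(\rho)=1$ and $Q_{max}R_sQ_{max}^*=\rho$.
\end{itemize}

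\textbf{Third equality.} $\Phi(\calD_{S_{max}})=m_{S_{max}}$ is item \ref{mS = diag DsubS} of Proposition \ref{prop 1}.

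\textbf{Directional derivative.} Since $x\mapsto\lambda_{max}(A(x))$ is a convex finite function on $\R^{\dim\calB}$, its directional derivative is the support function of its subdifferential:
\[
\lambda'_{max}(x,w)=\max_{v\in\partial(\lambda_{max}(A(x)))}\langle v,w\rangle .
\]
Using the first equality, $\langle v,w\rangle=\tr\big(R_s\sum_k w_kQ_{max}^*B_kQ_{max}\big)$. Maximizing over $R_s\in\calD^s$ gives the largest eigenvalue of the Hermitian $s\times s$ matrix $\sum_k w_kQ_{max}^*B_kQ_{max}$. This is the Rayleigh--Ritz/Ky Fan variational characterization: the maximum of $\tr(R H)$ over density matrices $R$ is $\lambda_{max}(H)$, attained at rank-one projections onto top eigenvectors.

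\textbf{Main obstacle.} The only delicate point is justifying the support-function identity and the chain rule behind \eqref{subdif A(x)}. The paper takes the latter as given, citing \cite{overton2} and \cite{clarke1}. The former is standard convex analysis for finite convex functions on $\R^m$ (Rockafellar). Alternatively, one can quote Theorem 3 of \cite{overton2}, which gives \eqref{partial der} directly, adapted to the Hermitian case.
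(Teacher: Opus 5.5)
Your proof is correct and takes essentially the same route as the paper. Both arguments start from the chain-rule formula \eqref{subdif A(x)} and identify $\{Q_{max}(x)R_sQ_{max}(x)^*: R_s\in\calD^s\}$ with $\calD_{S_{max}}$; you state this explicitly as a set equality of matrices, whereas the paper runs the two inclusions directly at the level of $m_{S_{max}}$. Both then apply item \ref{mS = diag DsubS} of Proposition \ref{prop 1}, and both obtain the directional derivative from the support-function identity combined with $\lambda_{max}(H)=\max_{R\in\calD^s}\tr(RH)$.
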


\begin{proof}
	Suppose $\lambda_{max}(A(x))$ has multiplicity $s$ and $S_{max}$ is the eigenspace of $\lambda_{max}(A(x))$ with a fixed orthonormal basis of eigenvectors $\{q_1(x),...,q_s(x)\}$. If $Q_{max}(x)=\begin{bmatrix}
		q_1(x)|...|q_s(x)
	\end{bmatrix}\in \C^{n\times s}$. Let $v\in\partial\lambda_{max}(A(x))$. As we observed previously,  $v\in W\left(\{B_k\}_{k=1}^{\dim(\calB)} \right)$ and  by \eqref{subdif A(x)},  has coordinates
	$$v_k=\tr \left( R_s Q_{max}(x)^*B_kQ_{max}(x)\right)=\tr \left(Y(x)B_k\right),$$ 
	with $R_s\in M_s^h(\C)$, $R_s\geq 0$, $\tr(R_s)=1$, for every $k=1,...,\dim{\calB}$. Observing that \[Y(x)=Q_{max}(x)R_s Q_{max}(x)^*\in\mathcal{D}_{S_{max}},\] 
	then
	 \[v=(v_1,v_2,...,v_n)=\Phi(Y(x))\in m_{S_{max}}\] 
	with $\Phi$ as in \eqref{defi phi}.
	
	On the other hand, take any $v\in m_{S_{max}}$.  The following argument is essentially the same as in Theorem 3.3 in \cite{BV-SDP}, but we include it for completeness. By Proposition \ref{prop 1}, it can be written as
	$$
	v=\left( \tr(B_1Y), \tr(B_2Y), ...,\tr(B_{\dim{\calB}}Y) \right), $$
	with $Y\in M_n^h(\C)$, $\tr(Y)=1$ and $Im(Y)\subset S_{max}$.
	In terms of the orthogonal decomposition $\C^n=S_{max}\oplus S_{max}^{\perp}$, given by the matrix $Q=\begin{bmatrix}
		Q_{max}(x)&Q_2(x)
	\end{bmatrix}$ ($Q_2$ is a matrix whose columns form an orthonormal set for $S_{max}^{\perp}$ and $Q$ is an unitary matrix), $Y$ is defined by
	$$Y=Q\begin{bmatrix}
		V&0\\
		0&0
	\end{bmatrix}Q^*=Q_{max}(x)VQ_{max}(x)^*$$
	with $V\in M_s^h(\C)$, $1=\tr(Y)=\tr(V)$ and $V\geq 0$. Therefore, $v\in \partial \lambda_{max}(A(x))$.
	
	The proof of the directional derivative is analogous to the case $B_k=e_ke_k^*$ made in Proposition 3.5 in \cite{BV-SDP}, since for every $w\in \R^n$ 
	\[\lambda_{max}'(x,w)=\max_{v\in \partial \lambda_{max}(A(x))} \left\langle v,w\right\rangle,\]
		\begin{eqnarray*}
		\lambda_{max}'(x,w)&=&\max\left\lbrace \sum_{k=1}^{\dim{\calB}} v_kw_k: v_k=\tr\left( R_s Q_{max}(x)^*B_kQ_{max}(x)\right): \text{ with } R_s\in \calD^s\right\rbrace\\
		&=&\max\left\lbrace  \tr\left(  R_s\sum_{k=1}^{\dim{\calB}}w_k Q_{max}(x)^*B_kQ_{max}(x)\right) :\ \text{with } R_s\in \calD^s\right\rbrace\\
		&=&\lambda_{max}(B(w)),
	\end{eqnarray*} 
	where the last equality  is due to the fact that $B(w)=\sum_{k=1}^{n}w_kQ_{max}(x)^*B_kQ_{max}(x)\in M_s^{h}(\C)$ and
	\begin{equation} \label{ray mat}
		\lambda_{max}(B(w))=\max \{\tr(B(w)R):\ R_s\in \calD^s\}. 
	\end{equation}

\end{proof}

We relate the subdifferential of the spectral norm of $A(x)$ to the subdifferentials of the minimum and maximum eigenvalues of $A(x)$, in a manner analogous to the one considered in the particular case $\mathcal{B}=\text{Diag}(M_n(\C))$ in \cite{BV-SDP}.

\begin{proposition} \label{eq subdif norma -eig matrix}
For $A(x)=A_0+\sum_{k=1}^{\dim{\calB}}x_kB_k$, with $A_0\in  M_n^{h}(\C)$, $\{B_k\}_{k=1}^{\dim{\calB}}$ be a fixed Hermitian basis of $\mathcal{B}^h$, $x\in \R^{\dim{\calB}}$, let $\lambda_{min}(A(x))$ and $\lambda_{max}(A(x))$ be the minimum and maximum eigenvalue of $A(x)$, respectively. Then,
\begin{enumerate}
	\item the subdifferential of the spectral norm of $A(x)$ is
		\begin{equation}\label{watsonhermit}
		\partial\|A(x)\|={\rm conv }\{uu^*: A(x)u=\|A(x)\|u \text{ and }\|u\|=1\}.
	\end{equation}
	\item If $S_{min}$ is the eigenspace corresponding to $\lambda_{min}(A(x))$, then
	\begin{equation}\label{subdiff min = momento}
		\partial\big(\lambda_{min}(A(x))\big)= -m_{S_{min}}.
	\end{equation}	
	\item We deduce that
	
	$\partial\|A(x)\|=$
	\begin{equation}
			\left\lbrace \begin{array}{lll}
				\partial\lambda_{max}(A(x))&\text{if}& \|A(x)\|=\lambda_{max}(A(x))\\
				\partial\lambda_{min}(A(x))&\text{if}& \|A(x)\|=-\lambda_{min}(A(x))=|\lambda_{min}(A(x))|\\
				{\rm conv}\left(\partial \lambda_{max}(A(x))\cup\partial \lambda_{min}(A(x)) \right)&\text{if}& \|A(x)\|=-\lambda_{min}(A(x))=\lambda_{max}(A(x)).
			\end{array}\right. 
	\end{equation}
	
\end{enumerate}	
\end{proposition}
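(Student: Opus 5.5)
The plan is to reduce all three items to the maximum-eigenvalue results already available, namely formula \eqref{subdif matrices} and Theorem \ref{teo subdif=ms}. Two identities drive the reduction:
\[
\lambda_{min}(A)=-\lambda_{max}(-A),\qquad \|A\|=\max\{\lambda_{max}(A),\lambda_{max}(-A)\}\quad\text{for } A\in M_n^h(\C),
\]
together with the standard rule for the subdifferential of a pointwise maximum of finitely many convex functions. That rule says $\partial\max(f,g)(x)$ equals $\partial f(x)$ or $\partial g(x)$ when only one of them is active at $x$, and equals $\operatorname{conv}(\partial f(x)\cup\partial g(x))$ when both are active. Both $\lambda_{max}$ and $\|\cdot\|$ are convex and finite-valued on finite-dimensional spaces, so the rule applies. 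For item 3 one should read $\partial\lambda_{min}$ as the subdifferential of the convex function $-\lambda_{min}=\lambda_{max}(-\,\cdot)$, as in \cite{BV-SDP}.

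\textbf{Item 2.} The map $x\mapsto -A(x)=-A_0+\sum_k x_k(-B_k)$ has the same form as in Theorem \ref{teo subdif=ms}, but with $-B_k$ in place of $B_k$. I will redo the chain-rule computation of \eqref{subdif A(x)} for $x\mapsto\lambda_{max}(-A(x))$ directly, rather than changing bases, to avoid sign ambiguities. The eigenspace of $\lambda_{max}(-A(x))$ is $S_{min}$, so the coordinates become $v_k=\tr(R_sQ_{min}^*(-B_k)Q_{min})=-\tr(YB_k)$ with $Y\in\calD_{S_{min}}$. Proposition \ref{prop 1}(\ref{mS = diag DsubS}) then gives $-\Phi(\calD_{S_{min}})=-m_{S_{min}}$. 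Since $\lambda_{min}(A(x))=-\lambda_{max}(-A(x))$, this identifies the set of subgradients with $-m_{S_{min}}$, under the paper's convention.

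\textbf{Item 1.} I will apply the max rule at the matrix level, to $\|\cdot\|=\max(\lambda_{max},\lambda_{max}(-\,\cdot))$ on $M_n^h(\C)$. By \eqref{subdif matrices}, $\partial\lambda_{max}(A)=\operatorname{conv}\{qq^*: Aq=\lambda_{max}q,\ \|q\|=1\}$. The same formula applied to $-A$ gives $\operatorname{conv}\{qq^*:(-A)q=\lambda_{max}(-A)q\}$, and a unit $q$ with $(-A)q=\|A\|q$ satisfies $Aq=-\|A\|q$, i.e.\ it is a unit eigenvector of $A$ for $-\|A\|$. Taking the union over the active branches and the convex hull yields
\[
\operatorname{conv}\{uu^*: Au=\pm\|A\|u,\ \|u\|=1\}.
\]
This is how I interpret \eqref{watsonhermit}: its condition should be read as $u$ being an eigenvector for an eigenvalue of modulus $\|A\|$, since in the case $\|A\|=-\lambda_{min}$ a vector with $Au=\|A\|u$ may not exist. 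Alternatively, $\|\cdot\|$ on $M_n^h(\C)$ is $\max\{\tr(XA):\|X\|_1\le 1\}$, and the maximisers are exactly these convex combinations, which gives the same set.

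\textbf{Item 3.} Here I compose with the affine map $x\mapsto A(x)$ via the chain rule, as was done for $\lambda_{max}$. The three cases are then the three activity patterns of the max rule: only $\lambda_{max}$ active, only $-\lambda_{min}$ active, or both active when $\lambda_{max}=-\lambda_{min}$. The last case gives the convex hull of the union.

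The main obstacle is bookkeeping rather than depth. The sign conventions in item 2, and in the third case of item 3, must be tracked consistently between $\partial\lambda_{min}$ and $\partial(-\lambda_{min})$. The max rule also has to be justified in this setting; I would cite it from Clarke \cite{clarke1} (or Danskin's theorem), since all functions involved are convex and finite.
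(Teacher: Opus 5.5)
Your item 2 is correct. Tracking the factor $-B_k$ through the chain rule is cleaner than the paper's one-line chain, and it shows that $-m_{S_{min}}$ is the subdifferential of the convex function $x\mapsto-\lambda_{min}(A(x))$, which is the reading item 3 needs.

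The genuine error is in item 1, where you drop exactly that sign. ``The same formula applied to $-A$'' is $\partial\lambda_{max}$ evaluated at the point $-A$. It is not the subdifferential at $A$ of $g=\lambda_{max}(-\,\cdot\,)$. Substituting $Y\mapsto -Y$ in Definition \ref{def subdiferencial} gives $\partial g(A)=-\partial\lambda_{max}(-A)$, so the branch $-\lambda_{min}$ contributes $-ww^*$, not $ww^*$. Consequently $\operatorname{conv}\{uu^*:Au=\pm\|A\|u,\ \|u\|=1\}$ is not $\partial\|A\|$. For $A=-I$ your set is all of $\calD$, yet for $\rho\in\calD$ and $0<t<1$ one has $\|A+tI\|-\|A\|=-t<t=\tr(\rho\,tI)$. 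Your duality cross-check fails for the same reason. If $Aw=-\|A\|w$ then $\tr(ww^*A)=-\|A\|$, so $ww^*$ minimises $\tr(XA)$ over the trace-norm unit ball. The maximisers are $\sum_i\alpha_iu_iu_i^*-\sum_j\beta_jw_jw_j^*$, with $u_i,w_j$ unit eigenvectors for $\pm\|A\|$, $\alpha_i,\beta_j\ge 0$ and $\sum_i\alpha_i+\sum_j\beta_j=1$.

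This also makes your items mutually inconsistent. Pushing your item 1 through $x\mapsto A(x)$ gives $\operatorname{conv}(m_{S_{max}}\cup m_{S_{min}})$ in the third case, whereas your item 2 computation gives $\operatorname{conv}(m_{S_{max}}\cup -m_{S_{min}})$. The unsigned version cannot be right. If $I\in\calB$, write $I=\sum_k c_kB_k$; then $\sum_k c_kv_k=\tr(Y)=1$ for every $v=\Phi(Y)$ with $Y\in\calD$. So $0$ would never lie in $\partial\|A(x)\|$, although the coercive convex function $x\mapsto\|A(x)\|$ attains its minimum, where $0$ must be a subgradient.

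The fix is to keep the sign:
\[
\partial\|A\|=\operatorname{conv}\big(\{uu^*:Au=\|A\|u\}\cup\{-ww^*:Aw=-\|A\|w\}\big),
\]
over unit vectors of the active branches only. This is what the formulas of \cite{watson} and \cite{bata-grover} cited in the paper give for Hermitian $A$. You are right that \eqref{watsonhermit} as printed is only valid when $\lambda_{max}(A(x))>-\lambda_{min}(A(x))$, but the correct extension is this signed set, not the unsigned one.

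With that correction your route is sound: derive item 1 from \eqref{subdif matrices} and the max rule, then compose with $\Phi$ for item 3. It is also more self-contained than the paper's, which quotes item 1 and simply asserts item 3.
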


\begin{proof}
	
\begin{enumerate}
	\item The proof of the subdifferential of the matrix norm of $A(x)$ is a direct consequence of the formulas obtained in \cite{watson} and \cite{bata-grover} for real  and complex cases, respectively.
	\item 	Since $\lambda_{min}(A(x))=-\lambda_{max}(-A(x))$ for any $A(x)\in M_n^{h}(\C)$, then by Theorem \ref{teo subdif=ms}
	\[\partial \lambda_{min}(A(x))=-\partial \lambda_{max}(-A(x))= -\Phi\left(\partial \lambda_{max}(-A(x)) \right)=-\Phi\left(\mathcal{D}_{S} \right) , \]
	with $\mathcal{D}_{S}=\text{conv}\{ss^*:\ \|s\|=1,\ -A(x)s=\lambda_{max}(-A(x))s\}$ ($S$ is the eigenspace associated to $\lambda_{max}(-A(x))$). Therefore
	\begin{equation*}
		\begin{split}
			\partial \lambda_{min}(A(x))&=-\partial \lambda_{max}(-A(x))\\
			&=-\Phi\left(\text{conv} \{ss^*: \|s\|=1,\ -A(x)s=\lambda_{max}(-A(x))s\} \right) \\
			&=-\Phi\left(\text{conv} \{ss^*: \|s\|=1,\ A(x)s=\lambda_{min}(A(x))s\} \right) \\
			&=-\Phi(\mathcal{D}_{S_n})=-m_{S_n}.
		\end{split}
	\end{equation*}
	
	\item It can be directly deduced from the formulas \eqref{watsonhermit}, \eqref{subdiff l1 = momento} and \eqref{subdiff min = momento} for each case.
\end{enumerate}

\end{proof}

\begin{theorem}\label{teo equiv subdif, momentos, etc para matrices}
	Let $A(x)=A_0+\sum_{k=1}^{\dim{\calB}}x_kB_k$, with $A_0\in  M_n^{h}(\C)$, $\{B_k\}_{k=1}^{\dim{\calB}}$ be a fixed Hermitian basis of $\mathcal{B}^h$, $x\in \R^{\dim{\calB}}$, and suppose that $\lambda_{max}(A(x))=-\lambda_{min}(A(x))$. Then, the following statements are equivalent
	
	\begin{enumerate}
		\item[(1)] $0\in \partial\|A(x)\|$.
		\item[(2)] $0\in\partial\lambda_{max}(A(x))+\partial\lambda_{min}(A(x))$.
		\item[(3)] $m_{S_{max}}\cap m_{S_{min}}\neq \emptyset$, where $S_{max}$ and $S_{min}$ are the eigenspaces of $\lambda_{max}(A(x))$ and $\lambda_{min}(A(x))$, respectively. 
		\item[(4)] $W\left( \left\lbrace P_{S_{max}}B_kP_{S_{max}}\right\rbrace_{k=1}^{\dim{\calB}} \right)\cap W\left( \left\lbrace P_{S_{min}}B_kP_{S_{min}}\right\rbrace_{k=1}^{\dim{\calB}} \right)\neq \{\overline{0}\} $. 
		\item[(5)] $A(x)$ is minimal.
	\end{enumerate}
\end{theorem}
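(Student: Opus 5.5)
We chain the equivalences through (3), which is the natural hub: items (1), (2) are analytic, (4) and (5) are handled by Theorem \ref{teo equiv JNR y minimal}. Throughout we use that $\lambda_{max}(A(x))=-\lambda_{min}(A(x))=\|A(x)\|$. We also assume, as in Theorem \ref{teo equiv JNR y minimal}, that $I\in\calB$. If $A(x)=0$ the statement is degenerate: $S_{max}=S_{min}=\C^n$, every item holds trivially, and (4) holds since $m_{\C^n}\neq\{0\}$ by Lemma \ref{lema properties of ms}. So we may assume $\|A(x)\|>0$, and then $\pm\|A(x)\|\in\sigma(A(x))$, with $S_{max}=E_+$ and $S_{min}=E_-$.

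\textbf{(2)$\Leftrightarrow$(3).} By Theorem \ref{teo subdif=ms} we have $\partial\lambda_{max}(A(x))=m_{S_{max}}$, and by Proposition \ref{eq subdif norma -eig matrix} we have $\partial\lambda_{min}(A(x))=-m_{S_{min}}$. Hence $0\in\partial\lambda_{max}+\partial\lambda_{min}$ if and only if there are $u\in m_{S_{max}}$ and $w\in m_{S_{min}}$ with $u-w=0$. This is exactly $m_{S_{max}}\cap m_{S_{min}}\neq\emptyset$.

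\textbf{(3)$\Leftrightarrow$(4)$\Leftrightarrow$(5).} These are the items \ref{item (3) teo 2}, \ref{item (1) teo 2} and \ref{item (6) teo 2} of Theorem \ref{teo equiv JNR y minimal}, applied to the Hermitian matrix $A(x)$ with $E_\pm=S_{max/min}$. The basis used there is orthonormal. If the fixed Hermitian basis here is not orthonormal, we transport via Remark \ref{change of basis} and Proposition \ref{propo: change of basis moment}. The change of coordinates is an invertible linear map $C$, and it preserves both nonemptiness of intersections and the condition $\neq\{0\}$, since $Cm_{S_{max}}\cap Cm_{S_{min}}=C(m_{S_{max}}\cap m_{S_{min}})$. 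Here ``minimal'' means $\calB$-minimal. Note that $\mathcal{B}$-minimality of $A(x)$ is the same as $x$ being a global minimizer of $y\mapsto\|A(y)\|$, because $A(y)-A(x)$ ranges over $\calB^h$ and the reduction to Hermitian elements was made in the introduction.

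\textbf{(1)$\Leftrightarrow$(2).} Here $\partial\|A(x)\|$ is read as the subdifferential of the convex function $x\mapsto\|A(x)\|$ on $\R^{\dim\calB}$, consistent with (2).
\begin{itemize}
\item For a convex function, $0\in\partial f(x)$ if and only if $x$ is a global minimizer. This gives (1)$\Leftrightarrow$(5) directly, and through the previous step (1)$\Leftrightarrow$(2).
\item Alternatively, by the third case of Proposition \ref{eq subdif norma -eig matrix}, $\partial\|A(x)\|=\mathrm{conv}(m_{S_{max}}\cup(-m_{S_{min}}))$ after the chain rule. Then $0$ lies in it if and only if $t u=(1-t)w$ for some $u\in m_{S_{max}}$, $w\in m_{S_{min}}$ and $t\in[0,1]$.
\item To get $t=\tfrac12$ we pair with the coordinate of $I$. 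By Remark 1 one basis element is $\frac1{\sqrt n}I$, and every point of any moment has that coordinate equal to $\frac1{\sqrt n}\tr(\rho)=\frac1{\sqrt n}$. This forces $t=1-t$, hence $u=w$, which is (3).
\end{itemize}

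The main obstacle is this last step: justifying the chain-rule identification of $\partial\|A(x)\|$ in the parameter space, and the normalization that forces $t=\tfrac12$. Both rely on $I\in\calB$. The convex-minimizer argument avoids the first issue, so I would use it as the primary route.
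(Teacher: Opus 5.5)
Your proposal is correct. For (2)$\Leftrightarrow$(3) and (3)$\Leftrightarrow$(4)$\Leftrightarrow$(5) it coincides with the paper's proof: the paper also uses $\partial\lambda_{max}(A(x))=m_{S_{max}}$ and $\partial\lambda_{min}(A(x))=-m_{S_{min}}$, then Theorem~\ref{teo equiv JNR y minimal} with $E_+=S_{max}$ and $E_-=S_{min}$.

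The real difference is how you bring in (1).

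\begin{itemize}
\item The paper proves (1)$\Leftrightarrow$(3). It writes $0=\alpha\Phi(Y_0)-(1-\alpha)\Phi(Z_0)$ inside $\text{conv}(m_{S_{max}}\cup -m_{S_{min}})$ and uses $\tr(Y_0)=\tr(Z_0)=1$ to force $\alpha=\frac12$. This is exactly your alternative bullet.
\item Your primary route instead proves (1)$\Leftrightarrow$(5). It uses the first-order optimality condition for the convex function $y\mapsto\|A(y)\|$, after noting that $A(y)-A(x)$ runs over all of $\calB^h$.
\end{itemize}

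Your route is more elementary. It needs neither the chain rule nor item~3 of Proposition~\ref{eq subdif norma -eig matrix}, and that link does not use $I\in\calB$. The paper's route is more explicit: it produces a common point of $m_{S_{max}}$ and $m_{S_{min}}$ directly from a zero subgradient. Its normalization step, however, depends on the identity coordinate.

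Your explicit assumption $I\in\calB$ is genuinely needed to connect (1), (5) with (2)--(4). The paper uses it only implicitly, through Theorem~\ref{teo equiv JNR y minimal}. For example, take $\calB=\C E_{11}\subset M_2(\C)$ and $A=E_{11}-E_{22}$. Then $A$ is $\calB$-minimal and (1) holds, yet $m_{S_{max}}=\{1\}$ and $m_{S_{min}}=\{0\}$ are disjoint.

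You also treat $A(x)=0$ separately, and handle a non-orthonormal basis by an invertible (not necessarily unitary) change of coordinates. Both fill small gaps that the paper's proof passes over.
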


\begin{proof}
	(1)$\Leftrightarrow$(3) If $0\in \partial \|A(x)\|$ and $\lambda_{max}(A(x))=-\lambda_{min}(A(x))$, then using Proposition \ref{eq subdif norma -eig matrix}
	$$0\in {\rm conv}\left(\partial \lambda_{max}(A(x))\cup\partial \lambda_{min}(A(x)) \right)={\rm conv}\left(m_{S_{max}}\cup -m_{S_{min}} \right)
	$$ 
	and there exist $\alpha\in (0,1)$, $Y_0\in \mathcal{D}_{S_{max}}$ and $Z_0\in \mathcal{D}_{S_{min}}$ such that $0=\alpha \Phi(Y_0)+(1-\alpha)\Phi(-Z_0)$. Both matrices $Y_0$ and $Z_0$ fulfill that $\tr(Y_0)=\tr(Z_0)=1$ (see \eqref{def matrices de densidad de S}). Thus, we obtain that $\alpha=\frac 12$ and then $\Phi(Y_0)=\Phi(Z_0)$. Therefore, $m_{S_{max}}\cap m_{S_{min}}\neq \emptyset$.

	The converse implication can be proved reversing the previous steps.

	(2)$\Leftrightarrow$(3) If we consider
	the formulas $\partial\big(\lambda_{max}(A(x))\big)= m_{S_{max}}$
	and $\partial \lambda_{min}(A(x))=-m_{S_{min}}$ from  \eqref{subdiff l1 = momento} and  \eqref{subdiff min = momento}, respectively, then it is trivial that   $m_{S_{max}}\cap m_{S_{min}}\neq \emptyset$ if and only if $0\in m_{S_{max}}-m_{S_{min}}=\partial \lambda_{max}(A(x))+\partial \lambda_{min}(A(x))$.
	
	The equivalence (4)$\Leftrightarrow$(5) was proved in Theorem \ref{teo equiv JNR y minimal}.
	
	(3)$\Leftrightarrow$(4) Using that $S_{max}=E_+$, $S_{min}=E_-$ in Theorem \ref{teo equiv JNR y minimal}, we obtain that 
	\[W\left( \left\lbrace P_{S_{max}}B_kP_{S_{max}}\right\rbrace_{k=1}^{\dim{\calB}} \right)\cap W\left( \left\lbrace P_{S_{min}}B_kP_{S_{min}}\right\rbrace_{k=1}^{\dim{\calB}} \right)\neq \{0\} \]
	is equivalent to there exist $\rho_+\in \calD_{S_{max}}$ and $\rho_-\in \calD_{S_{min}}$ such that $\tr(\rho_+B_k )=\tr(\rho_-B_k )$, $\forall k=1,\dots,\dim\calB$. Observe that the vector 
	\begin{equation*}
		\begin{split}
			v&=\left(\tr(\rho_+B_1), \tr(\rho_+B_2),...,\tr(\rho_+B_{\dim\calB}) \right) \\ &=\left(\tr(\rho_-B_1), \tr(\rho_-B_2),...,\tr(\rho_-B_{\dim\calB})\right) \in m_{S_{max}}\cap m_{S_{min}}.		
		\end{split}
	\end{equation*}
\end{proof}

\section{Cases and Examples}\label{sec examples}
In this section we illustrate some cases and examples of $\calB$-minimal matrices and moments of subspaces, including a case related to quantum information theory in Example \ref{quantum ex}.

Consider $\mathcal{B}_n= \{D\in M_n(\C): D\ \text{ is diagonal }\}$ the $C^*$-subalgebra of diagonal matrices respect an orthonormal fixed basis $\{e_k\}_{k=1}^n$ of $\C^n$ and a subspace $S\subset \C^n$. 
In \cite{KV moment} it was characterized the moment of $S$ respect  the orthonormal real basis in $\mathcal{B}_n$ given by
\begin{equation}
	\mathbb{E}=\{E_i= e_ie_i^*\in M_n(\C)\}_{i=1}^n.
\end{equation}
The moment of $S$ was defined in terms of this basis as
\begin{equation}
	m_S^{\mathbb{E}}= \phi^{\mathbb{E}}(\mathcal{D}_S),
\end{equation}
where $\phi^{\mathbb{E}}(Y)=\text{Diag}(Y)=\sum_{i=1}^{n}Y_{ii}E_i$. Also, the joint numerical range respect this family is
\begin{align*}
	W_{S,\mathbb{E}}&=\{(\tr(\rho P_SE_1P_S), \tr(\rho P_SE_2P_S),...,\tr(\rho P_SE_2P_S))\in \R^n: \rho \in \mathcal{D}\}\\
	&= \{\varepsilon x: x\in m_S^{\mathbb{E}}, \varepsilon\in[0,1]\}.
\end{align*}
Now consider another orthonormal real basis in $\mathcal{B}_n$, $\mathbb{B}=\{B_i\}_{i=1}^n$. By Remark \ref{change of basis}, the function $\phi_S^B$ can be described as
\begin{align*}
	\phi^{\mathbb{B}}(Y)&= (\tr(YB_1), \tr(YB_2),...,\tr(YB_n))\\
	& =\begin{pmatrix}
		\beta_1^1&\beta_2^1&\cdots&\beta_{n}^1\\
		\beta_1^2&\beta_2^2&\cdots&\beta_{n}^2\\
		\vdots&\vdots&\vdots&\vdots\\
		\beta_1^{n}&\beta_2^{n}&\cdots&\beta_{n}^{n}\\
	\end{pmatrix}\cdot\begin{pmatrix}
		\langle Y,  E_1\rangle\\
		\langle Y,  E_2\rangle\\
		\vdots\\
		\langle Y,  E_n\rangle		
	\end{pmatrix}\\
	&= C_{\mathbb{E},\mathbb{B}}\Phi^{\mathbb{E}}(Y)= C_{\mathbb{E},\mathbb{B}}\Diag(Y).	
\end{align*} 
where $\beta_i^k=\langle B_k,E_i\rangle=\tr(B_kE_i)= (B_k)_{ii}$ for every $1\leq k,i\leq n$. Then, the $k$-row of $C_{\mathbb{E},\mathbb{B}}$ are the diagonal of $B_k$, for $1\leq k\leq n$. Observe also that $\langle B_i,B_j\rangle=\delta_{ij}$, since $B$ is a diagonal (real) orthonormal basis, so $ C_{\mathbb{E},\mathbb{B}}^t=C_{\mathbb{B},\mathbb{E}}$. Then, by Proposition \ref{propo: change of basis moment} the moment of $S$ and the joint numerical range respect to this basis can be written as
\begin{equation}\label{momento ej cambio de base}
	m_S^{\mathbb{B}}= \begin{pmatrix}
		\text{---}&\Diag(B_1)&\text{---}\\
		\text{---}&\Diag(B_2)&\text{---}\\
		\vdots&\vdots&\vdots\\
		\text{---}&\Diag(B_n)&\text{---}
	\end{pmatrix}m_S^{\mathbb{E}}= \begin{pmatrix}
	\text{---}&\Diag(B_1)&\text{---}\\
	\text{---}&\Diag(B_2)&\text{---}\\
	\vdots&\vdots&\vdots\\
	\text{---}&\Diag(B_n)&\text{---}
	\end{pmatrix} \Diag(\calD_{S})
\end{equation}
and
\begin{equation}\label{jnr ej cambio de base}
	W_{S,\mathbb{B}}= \begin{pmatrix}
		\text{---}&\Diag(B_1)&\text{---}\\
		\text{---}&\Diag(B_2)&\text{---}\\
		\vdots&\vdots&\vdots\\
		\text{---}&\Diag(B_n)&\text{---}
	\end{pmatrix}W_{S,\mathbb{E}},
\end{equation}
respectively.

\begin{example}
	Consider the subspace $S\subseteq\C^3$ given by $S=\text{span}\{\frac{1}{\sqrt{2}}(1,1,0);(0,0,1)\}$. Define the rank one orthogonal projections
	\begin{align*}
		P&=\frac{1}{\sqrt{2}}(1,1,0)\otimes \frac{1}{\sqrt{2}}(1,1,0)=\begin{pmatrix}
			\frac 12&\frac 12&0\\
			\frac 12&\frac 12&0\\
			0&0&0
		\end{pmatrix},\\		
		Q&=(0,0,1)\otimes (0,0,1)=\begin{pmatrix}
			0&0&0\\
			0&0&0\\
			0&0&1
		\end{pmatrix}.\\
	\end{align*}
	Using these projections, the set $\calD_{S}$ in this case is
	\[\calD_{S}=\{Y\in \calD:\ Y=\alpha P+(1-\alpha)Q,\ 0\leq \alpha\leq 1\}.\]
	Then, the moment in the basis $\mathbb{E}=\{E_1;E_2;E_3\}$ is
	\[m_S^{\mathbb{E}}=\Diag(\calD_{S})= \left\lbrace \left( \frac{\alpha}{2}, \frac{\alpha}{2},1-\alpha\right):\ 0\leq \alpha\leq 1 \right\rbrace, \]
	which is the line segment between $(0,0,1)$ and $\left( \frac 12,\frac 12,0\right)$. Now consider the following diagonal matrices, which clearly form an orthonormal basis of real diagonal matrices for $\calD_3(\C)$:
\[\left. \begin{array}{lll}
	B_1=\text{Diag}\left( (1,0,0)\right)\\		
	B_2=\text{Diag}\left( \left(0,\frac{1}{\sqrt{2}},-\frac{1}{\sqrt{2}}\right)\right)\\
	B_3=\text{Diag}\left( \left(0,\frac{1}{\sqrt{2}},\frac{1}{\sqrt{2}}\right)\right)\\	
\end{array}\right\rbrace \Rightarrow C_{\mathbb{E},\mathbb{B}}=\begin{pmatrix}
1&0&0\\
0&\frac{1}{\sqrt{2}}&-\frac{1}{\sqrt{2}}\\
0&\frac{1}{\sqrt{2}}&\frac{1}{\sqrt{2}}\\
\end{pmatrix}.
\]
Observe that $C_{\mathbb{E},\mathbb{B}}$ is a rotation matrix of 45° about the $x$-axis. In this case, we obtain that
\begin{align*}
	m_S^{\mathbb{B}}&=\left\lbrace \begin{pmatrix}
		1&0&0\\
		0&\frac{1}{\sqrt{2}}&-\frac{1}{\sqrt{2}}\\
		0&\frac{1}{\sqrt{2}}&\frac{1}{\sqrt{2}}\\
	\end{pmatrix}\begin{pmatrix}
		\frac{\alpha}{2}\\
		\frac{\alpha}{2}\\
		1-\alpha
	\end{pmatrix}:\  0\leq \alpha\leq 1\right\rbrace\\
	&=\left\lbrace \left(\frac{\alpha}{2}, \frac{\sqrt{2}}{2}\left(\frac{3}{2}\alpha-1\right) , \frac{\sqrt{2}}{2}\left(1-\frac{\alpha}{2}\right)  \right) :\  0\leq \alpha\leq 1\right\rbrace.
\end{align*}
Therefore, $m_S^{\mathbb{B}}$ is a line segment rotation of $m_S^{\mathbb{E}}$ (see Figure \ref{graf moment}).

Now consider another subspace given by $V=\text{span}\{\frac{1}{\sqrt{2}}(1,1,0)\}$ and note that $S\perp V$. For this subspace, the moment is $m_V^{\mathbb{E}}=\{(\frac 12,\frac12,0)\}$ and $m_V^{\mathbb{E}}\cap m_S^{\mathbb{E}} = \{(\frac 12,\frac12,0)\}\neq \emptyset$. Then, by Theorem \ref{teo equiv JNR y minimal}, $M_{\lambda}= \lambda(P+Q)-\lambda (I-P-Q)$ is $\calD_3(\C)$-minimal and invertible with $\|M_\lambda\|=\lambda>0$, that is
\[M_{\lambda}=\lambda(2P+2Q-I)=\lambda\begin{pmatrix}
	0&1&0\\
	1&0&0\\
	0&0&1
\end{pmatrix}.\]
\begin{center}
	\begin{figure}[h!] 
	\includegraphics[width=.7\textwidth]{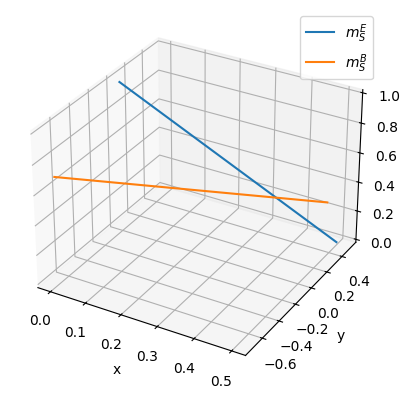}
	\caption{Moments in different basis of real diagonals of $3\times 3$}
	\label{graf moment}
	\end{figure}
\end{center}
\end{example}

\begin{example} [A basis of diagonal real matrices using Pauli strings] \label{quantum ex}
	Consider $\C^4$ and the following diagonal matrices 
	\[\begin{array}{lll}
		B_1=\frac 12I_4&B_2=\frac 12\text{Diag}\left( (1,1,-1,-1)\right)\\		
		B_3=\frac 12\text{Diag}\left( (1,-1,1,-1)\right)&B_4=\frac 12\text{Diag}\left( (1,-1,-1,1)\right).\\
	\end{array}
	 \]
	The set $\mathbb{B}=\{B_1;B_2;B_3;B_4\}$ is an orthonormal basis of $\mathcal{B}_4$ since 
	\[\tr(B_iB_j)=\left\lbrace \begin{array}{lll}
		0&\text{if}& i\neq j\\
		1&\text{if}& i=j
	\end{array} \right. \]
	for every $1\leq i,j\leq 4$. 
		If we consider the isomorphism $\C^4\cong \C^2\otimes \C^2$, where $\otimes$ denotes the Kronecker product, we obtain that each element of the basis $\mathbb{B}$ can be identified with a Kronecker product matrix as follows:
	\begin{align*}
		2B_1&= I_2\otimes I_2= I_4\\
		2B_2&=Z\otimes I_2= \text{Diag}\left( (1,1,-1,-1)\right) \\
		2B_3&=I_2\otimes Z= \text{Diag}\left( (1,-1,1,-1)\right)\\
		2B_4&=Z\otimes Z= \text{Diag}\left( (1,-1,-1,1)\right),
	\end{align*}
	where $I_2=\begin{pmatrix}
		1&0\\
		0&1
	\end{pmatrix}$ and $Z=\begin{pmatrix}
		1&0\\
		0&-1
	\end{pmatrix}$ are the identity and the Pauli $2\times 2$ diagonal matrix, respectively. The set $\mathbb{O}=\{I_4;2B_2;2B_3;2B_4\}$ is the Quantum computational basis for diagonal Hermitian Hamiltonians acting on $2$ qubits (Pauli strings). So, given a non diagonal Hamiltonian $H_0$, there exists a Hamiltonian $B_0$ which is a best diagonal approximation of $H_0$ in the spectral norm.

	In this basis, the moment for any subspace $S\subseteq \C^4$ is
	\[m_S^{\mathbb{B}}=C_{\mathbb{E},\mathbb{B}}m_S^{\mathbb{E}}\]
	with $C_{\mathbb{E},\mathbb{B}}=C_{\mathbb{B},\mathbb{E}}^t= \frac 12\begin{pmatrix}
		1&1&1&1\\
		1&1&-1&-1\\
		1&-1&1&-1\\
		1&-1&-1&1
	\end{pmatrix}^t= C_{\mathbb{B},\mathbb{E}}$.
	
	Take $S=\text{span}\{e_1;e_2\}$, $\calD_{S}=\{Y\in \calD:\ Y=\alpha e_1^*e_1+(1-\alpha)e_2^*e_2,\ 0\leq \alpha\leq 1\}$ and
	\[m_S^{\mathbb{E}}=\Diag(\calD_{S})=\{(\alpha,1-\alpha,0,0):\ 0\leq \alpha\leq 1\}.\]
	Then, 
	\[m_S^{{\mathbb{B}}}= \frac 12\begin{pmatrix}
		1&1&1&1\\
		1&1&-1&-1\\
		1&-1&1&-1\\
		1&-1&-1&1
	\end{pmatrix}\begin{pmatrix}
	\alpha\\
	1-\alpha\\
	0\\
	0
	\end{pmatrix}=\frac{1}{2}\begin{pmatrix}
	1\\
	1\\
	2\alpha-1\\
	2\alpha-1
	\end{pmatrix}\]
		
\end{example}

\begin{example}[a $C^*$-subalgebra of block operators]
	Consider $\calB= \calD_2(\C)\oplus M_2(\C)\subset M_4(\C)$. Any $B\in \calB$ can be written as
	\[
	B = \left[ 
	\begin{array}{c|c}
		D & 0
		\\
		\hline
		0 & T
	\end{array} 
	\right]= \left[ 
	\begin{array}{c|c}
		\begin{matrix}
			b_{11}&0\\
			0&b_{22}
		\end{matrix} & \begin{matrix}
			0&0\\
			0&0
		\end{matrix} \\
		\hline
		\begin{matrix}
			0&0\\
			0&0
		\end{matrix} & \begin{matrix}
		b_{33}&b_{34}\\
		\overline{b_{34}}&b_{44}
		\end{matrix}
	\end{array} 
	\right],\ b_{ii}\in \R, \ b_{34}\in \C.
	\]
	An orthonormal basis for $\calB$ is $\mathbb{B}=\{E_1,E_2,E_3,E_4, W^{3,4}, W^{4,,3}\}$, where
	\[
	W^{3,4} = \left[ 
	\begin{array}{c|c}
		\begin{matrix}
			0&0\\
			0&0
		\end{matrix} & \begin{matrix}
			0&0\\
			0&0
		\end{matrix} \\
		\hline
		\begin{matrix}
			0&0\\
			0&0
		\end{matrix} & \begin{matrix}
			0&\frac{1}{\sqrt{2}}\\
			\frac{1}{\sqrt{2}}&0
		\end{matrix}
	\end{array} 
	\right]\text{ and }\quad W^{4,3} = \left[ 
	\begin{array}{c|c}
		\begin{matrix}
			0&0\\
			0&0
		\end{matrix} & \begin{matrix}
			0&0\\
			0&0
		\end{matrix} \\
		\hline
		\begin{matrix}
			0&0\\
			0&0
		\end{matrix} & \begin{matrix}
			0&-\frac{i}{\sqrt{2}}\\
			\frac{i}{\sqrt{2}}&0
		\end{matrix}
	\end{array} 
	\right].
	\]
	Accordingly to Proposition \ref{prop 1} the moment of a subspace $S$ respect to this basis is
	\begin{align*}
		m_S^{\mathbb{B}}&=\{(\tr(YE_1),\tr(YE_2),\tr(YE_3),\tr(YE_4), \tr(YW^{3,4}),\tr(YW^{4,3})): Y\in \calD_{S}\}\\
		&=\{(Y_{11}, Y_{22},Y_{33},Y_{44}, \frac{1}{\sqrt{2}}(Y_{34}+Y_{43}), \frac{i}{\sqrt{2}}(Y_{34}-Y_{43}): Y\in \calD_{S}\}\subset \R^6.
	\end{align*}
	If $S=\text{span}\{v\}$, $v=(v_1,v_2,v_3,v_4)\in \C^4$ unitary
	\[m_S^{\mathbb{B}}=\left\{\left(|v_1|^2,|v_2|^2,|v_3|^2,|v_4|^2,\frac{2}{\sqrt{2}}\text{Re}(\overline{v_3}v_4),-\frac{2}{\sqrt{2}}\text{Im}(\overline{v_3}v_4) \right)\right\}.\]
	In particular, for $v=\frac 12(1,1,1,1)$
	\[m_S^{\mathbb{B}}=\left\{\left(\frac 14,\frac 14,\frac 14,\frac 14,\frac{1}{2\sqrt{2}},-\frac{1}{2\sqrt{2}}\right)\right\}.\]
	Let $V=\text{span}\{\frac 12(-1,-1,1,1)\}$, then $V\perp S$ and $m_S^{\mathbb{B}}=m_V^{\mathbb{B}}$. Using the orthogonal rank one projectors
		\begin{align*}
		P&=\frac{1}{2}(1,1,1,1)\otimes \frac{1}{2}(1,1,1,1)=\frac 14\begin{pmatrix}
			1&1&1&1\\
			1&1&1&1\\
			1&1&1&1\\
			1&1&1&1\\
			\end{pmatrix},\\		
		Q&=\frac{1}{2}(-1,-1,1,1)\otimes \frac{1}{2}(-1,-1,1,1)=\frac 14\begin{pmatrix}
			1&1&-1&-1\\
			1&1&-1&-1\\
			-1&-1&1&1\\
			-1&-1&1&1\\
		\end{pmatrix}.
	\end{align*}
	By Theorem \ref{teo equiv JNR y minimal}, any matrix of the form $M_{\lambda,\mu}= \lambda(P-Q)+\mu(I-P-Q)$, with $0<\mu<\lambda$ is a $\calB$-minimal matrix
	\[M_{\lambda,\mu}=\lambda\begin{pmatrix}
		0&0&\frac 12&\frac 12\\
		0&0&\frac 12&\frac 12\\
		\frac 12&\frac 12&0&0\\
		\frac 12&\frac 12&0&0\\
	\end{pmatrix}+\mu \begin{pmatrix}
	\frac 12&\frac 120&0&\\
	\frac 12&\frac 120&0&\\
	0&0&\frac 12&\frac 12\\
	0&0&\frac 12&\frac 12\\
	\end{pmatrix}.\]
\end{example}

\end{document}